\newtheorem{theorem}{Theorem}[section]
\newtheorem{lemma}[theorem]{Lemma}
\newtheorem{proposition}[theorem]{Proposition}
\newtheorem{corollary}[theorem]{Corollary}
\newtheorem{definition}[theorem]{Definition}
\theoremstyle{remark}
\newtheorem{remark}[theorem]{Remark}
\tikzstyle{if} = [diamond, draw, fill=gray!20, 
\tikzstyle{block} = [rectangle, draw, fill=gray!20, 
\tikzstyle{group} = [rectangle, draw, text width=34em, text centered, rounded corners, minimum height=16.5em, dashed]
\tikzstyle{note} = [rectangle,text width=2em, text centered, rounded corners, minimum height=1em, font=\bfseries]
\tikzstyle{line}=[draw,->, >=latex', shorten >=1pt, thin]
\tikzstyle{io} = [trapezium, trapezium left angle=70, trapezium right angle=110, minimum width=2em, minimum height=1em, text centered, text width=16em, draw=black, fill=gray!20]
\tikzstyle{cloud} = [draw, ellipse,fill=gray!20, node distance=3cm,
\def\D2{\mathcal{D}_2}
\def\DI{\mathcal{D}_\infty}
\def\E2{\mathcal{E}_2}
\def\EI{\mathcal{E}_\infty}
\def\F{\mathcal{F}}
\def\G{\mathcal{G}}
\def\FF{\widetilde{\F}}
\def\FFk#1{\FF_{k_{#1}}}
\def\GG{\widetilde{\G}}
\def\R{\mathbb R}
\def\e{\varepsilon}
\title{A New Algorithm to Fit Exponential Decays} 
\author{J.~A.~F.~Torvisco \thanks{jfernandck@alumnos.unex.es}, M.~R.~Arias \thanks{arias@unex.es Corresponding author} and J.~Cabello~Sánchez \thanks{coco@unex.es} \\ Facultad de Ciencias, Universidad de Extremadura,\\ Avda. de Elvas s/n, 06006 Badajoz. Spain}
\date{}
\begin{document}

\maketitle

\begin{abstract}
This paper deals with some nonlinear problems which exponential and biexponential decays are involved in. A proof of the quasiconvexity of the error function in some of these problems of optimization is presented. This proof is restricted to fitting observations by means of exponentials having the form $f(t)=\lambda_1\exp(kt)+\lambda_2$. Based on its quasiconvexity, we propose an algorithm to estimate the best approximation to each of these decays. Besides, this algorithm does not require an initial guess.  
\end{abstract}


\section{Introduction}
Exponential decays are involved in a wide class of processes. Some of these are radioactive processes, cooling processes, dumped oscilations in charging processes and phenomena which arise from the superposition of purely periodic processes whose periods do not have integer ratios such as brightness fluctuations of variable stars or ebb and flood tide, see~\cite[p.~355-363]{willers_practical}. Exponential decays also appear in studies of viscoelastic materials. In oversimplified Maxwell and in Kelvin-Voigt models the exponential decay can be seen and it occurs in studies of stress relaxation, a typical behaviour of viscoelastic materials (see \cite[chap.~2]{marques_maxwell}). A broad collection of references about problems involving exponential decays can be consulted in the introduction of \cite{hokanson_numerically}. 

For the last two centuries, many efforts have been made to fit observations related with these problems with sums of exponential functions \cite{hokanson_numerically,Holmstrom}, \cite[p.~369-371]{whittaker_calculus}. J.~M.~Hokanson explains in the introduction of \cite{hokanson_numerically} how these efforts can be clasified in two groups. Namely, some of them are related to Prony's methods \cite{householder_prony} and other ones to least square fitting such as Levenberg-Marquardt method.

The aim of this paper is to propose a method to fit some observations by means of exponential decays. The way to achieve this goal will be to locate and estimate the minimum of a real function just by sampling it. Obviously, we have needed to study which properties guarantee this will work. Equally evident is that the function to study should verify, at least, one of these properties. Maybe this would be sufficient to assert that the method we propose can not be enclosed within any of previously referred groups. Convexity is a common lifesaver in these situations; for us, quasiconvexity played this rôle. Two kindnesses of our method are a lack of an initial guess for the approximation along with a quite reasonably time of processing.

The function $\EI$, which measures the error of some approximations, is defined on the first paragraph of Section \ref{s_max_norm}, where we prove its quasiconvex character. Also, conditions for the existence and uniqueness of the absolute minimum for $\EI$ are determined. Section \ref{s_TAC} is devoted to present the algorithm we propose, named TAC.  In sections \ref{s_aj_cooling} and \ref{s_aplications_biexp}, TAC has been used to fit data coming from a cooling process and a from study of cells' stress relaxation, respectively.

In this paper, $\R^+$ denotes $\{x\in\R : x>0\}$ and $\R^-$, $\{x\in\R : x<0\}$. We will always assume $\R^n$ and every subset endowed with its \textit{usual topology}, i.e., the one induced by the Euclidean norm. 

Throughout this paper $T=(T_1,\ldots,T_n)$ will denote a vector of $\R^n$ produced by $n$ observations of a given observable. 

The forerunner of this paper has been to give a satisfactory answer to some questions about a problem of approximation. In that problem, $T$ will be the element to be approximated; $\G$, the family of aproximants; and $\|\cdot\|_2$, the approximation criteria. $\G$ is defined as
\begin{equation*}
\G=\{(\lambda_1 e^{kt_1}+ \lambda_2,\cdots,\lambda_1 e^{kt_n}+ \lambda_2):k \in \R^-, \lambda_1, \lambda_2 \in \R\};
\end{equation*}
being $t_1<\ldots<t_n\in\R$ the instants in which the observations of $T$ were taken. 

For any $k \in \R^-$, $G_{k}$ will denote the linear plane generated by 
\begin{equation*}
E=(e^{k t_1},\cdots,e^{k t_n}) \text{ and }  I=(1, \dots, 1).
\end{equation*}
$G_{k}$ is a finite-dimensional linear subspace of the Hilbert space $(\R^n, \|\cdot\|_2)$. 
Therefore, the existence and uniqueness of the best approximation to $T$ in $G_{k}$ is a well known problem \cite[section 7.4, p.~276]{blum1972numerical}. Namely, the best approximation to $T$ in $G_{k}$ is $\lambda_1E+\lambda_2 I$, where the values of $\lambda_1 \text{ and } \lambda_2$ can be obtained by solving the linear equations system
\begin{equation*}
\left\lbrace
\begin{aligned}
\lambda_1 \langle E,E \rangle  + \lambda_2 \langle I,E \rangle & =  \langle T,E \rangle \\
\lambda_1 \langle E,I \rangle \;  + \lambda_2 \, \langle I,I \rangle & =  \langle T,I \rangle 
\end{aligned}
\right.
\end{equation*}
and $\langle \cdot,\cdot \rangle $ denotes the usual inner product in $\R^n$. The solution of this system is given by 
\begin{equation} \label{e_expresiones_lambdas}
\lambda_1(k) = \frac{\langle I,I\rangle \langle T, E\rangle -\langle E,I\rangle\langle T, I\rangle}{\langle I,I\rangle \langle E, E\rangle -\langle E,I\rangle\langle E, I\rangle},\;
\lambda_2(k) = \frac{ \langle E, E\rangle \langle T, I\rangle -\langle E,I\rangle\langle T, E\rangle}{\langle I,I\rangle \langle E, E\rangle -\langle E,I\rangle\langle E, I\rangle}.
\end{equation}
Since $\G$ is neither a linear space nor a convex or closed set, we do not have a priori results about the existence and uniqueness of best approximations. We are, however, going to determine a way to estimate the best approximation.

To do so, we need to define a couple of auxiliary functions, $\D2$ and $\E2$, later we are going to discuss some desirable properties of $\E2$.

Let us consider $\G$ as $\bigcup_{k \in \R^-} G_{k}$. For each $k \in \R^-$, let $\F_k$ denote the unique best approximation to $T$ in $G_{k}$.

We define the functions $\D2$ and $\E2$ as follows
\begin{equation*}
\begin{array}{ccl}
\D2:\R^- & \longrightarrow & \hspace{1.5mm} (\R^n, \|\cdot\|_2)\\
\quad k & \longmapsto & \D2(k):=\F_k-T,
\end{array}
\end{equation*}
and $\E2 = \|\cdot\|_2 \circ \D2$.

To guarantee the existence of a best approximation of $T$ in $\G$ we would want to prove that $\E2$ has a certain property: the quasiconvexity. We introduce now its definition.

\begin{definition}\label{d_quasiconvex}
Let $X$ be a linear topological space. A function $f:X\to\R$ is said to be quasiconvex whenever it satisfies
\begin{equation*}
f(\lambda x + (1-\lambda) y) \leq max\{f(x), f(y)\}, \quad \forall x,y \in X \text{ and } \lambda \in (0,1).
\end{equation*} 
We will say that $f$ is strictly quasiconvex when the inequality is strict.
\end{definition}

To the best of our knowledge, the first time this concept was introduced was in  \cite[p.~1554]{harvey1971review}. Some results and properties related to quasiconvex  functions can be found in, for example, \cite{harvey1971review,merentes2010el}. 

It is clear that a strictly quasiconvex function cannot have relative minima which are not absolute. Proving that $\E2$ is a strictly quasiconvex function, the problem of approximation would be solved; since every relative minima of this function is absolute. Quasiconvexity, then, allows us to construct an algorithm to find such absolute minimum or minima by sampling the function.

We tried to prove that $\E2$ is a quasiconvex function, but every attempt was a failure. We have not even found general conditions on $T$ that could make $\E2$ quasiconvex. Because of that, we decided to change the norm, using $\|\cdot\|_\infty$ instead of $\|\cdot\|_2$.

\section{Quasiconvexity of $\EI$} \label{s_max_norm}

Throughout this section we will consider $\R^n$ endowed with $\|\cdot\|_\infty$. We must now define two new functions that will play the rôle of $\D2$ and $\E2$:
\begin{equation*}
\begin{array}{ccl}
\DI:\R^- & \longrightarrow & \hspace{1.5mm} (\R^n, \|\cdot\|_\infty)\\
\quad k & \longmapsto & \DI(k):=\F_k-T,
\end{array}
\end{equation*}
and $\EI = \|\cdot\|_{\infty} \circ \DI$, i.e., $\EI(k)=\|\F_k-T\|_\infty$.

For $\DI$ to be well-defined, we need to prove the existence and uniqueness of a best approximation in every $G_k$. This was not necessary before, but now, since $(\R^n, \|\cdot\|_{\infty})$ is not a Prehilbertian space, we need to ensure it.

Therefore, we will: 
\begin{enumerate}
\item Prove the existence and uniqueness of the best approximation in each $G_k$. \label{enum_en_cada_plano}
\item Prove the quasiconvexity of $\EI$ and determine conditions for existence and uniqueness of a best approximation of $T$ in $\G$. \label{enum_en_todos_los_planos}
\end{enumerate}

\subsection{Existence and uniqueness of the best approximation in each $G_k$}

\begin{lemma}\label{existenciaJ}
Let $T\in\R^n, F\subset\R^n$ a closed subset. Then, there exists at least one $x\in F$ such that $\|T-x\|_\infty=\inf\{\|T-y\|_\infty:y\in F\}$.
\end{lemma}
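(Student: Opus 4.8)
The plan is to reduce this to minimizing a continuous function over a \emph{compact} set, where the Weierstrass extreme value theorem forces the infimum to be attained. The only genuine idea needed is that although $F$ may fail to be bounded, the distance to $T$ cannot be made small far away, so the unbounded part of $F$ can be discarded without affecting the infimum.

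First I would dispose of the trivial case $F=\emptyset$ (where the infimum is $+\infty$), so assume $F$ is nonempty and set $d:=\inf\{\|T-y\|_\infty:y\in F\}\ge 0$. Fixing any $y_0\in F$ and writing $R:=\|T-y_0\|_\infty$, I would note that $d\le R$, and that points of $F$ lying outside the closed ball $\bar B:=\{y\in\R^n:\|T-y\|_\infty\le R\}$ are irrelevant to the infimum.

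Next I would introduce $K:=F\cap\bar B$. Being the intersection of two closed sets it is closed, and being contained in $\bar B$ it is bounded, so by the Heine--Borel theorem $K$ is compact; it is also nonempty, since $y_0\in K$. The key verification is that $\inf\{\|T-y\|_\infty:y\in K\}=d$: indeed any $y\in F\setminus\bar B$ satisfies $\|T-y\|_\infty>R\ge d$, hence such points cannot push the infimum over $F$ below the infimum over $K$.

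Finally, the map $y\mapsto\|T-y\|_\infty$ is continuous on $\R^n$ (it is $1$-Lipschitz, since $\bigl|\,\|T-y\|_\infty-\|T-y'\|_\infty\,\bigr|\le\|y-y'\|_\infty$), so its restriction to the compact set $K$ attains a minimum at some $x\in K\subset F$ by Weierstrass; that minimum value equals $d$. This yields $\|T-x\|_\infty=d=\inf\{\|T-y\|_\infty:y\in F\}$, completing the argument. I do not expect a real obstacle here; the single point deserving care is the passage from the possibly unbounded $F$ to the compact $K$ while keeping the infimum unchanged.
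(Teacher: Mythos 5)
Your proof is correct and follows essentially the same route as the paper: fix a point of $F$, intersect $F$ with the closed ball of the corresponding radius around $T$ to obtain a nonempty compact set on which the infimum is unchanged, and apply the extreme value theorem to the continuous distance function. The only cosmetic difference is which degenerate case you set aside first ($F=\emptyset$ versus $T\in F$); the substance is identical.
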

\begin{proof}
Suppose $T\in\R^n$, and $F\subset\R^n$ is a nonempty closed subset that does not contain $T$, being trivial the case $T\in F$. Take $z\in F$ and $\alpha=\|T-z\|_\infty$. It is pretty clear that
$$
\inf\{\|T-y\|_\infty:y\in F\}=\inf\{\|T-y\|_\infty:y\in F, \|T-y\|_\infty\leq\alpha\},
$$
and the last expression is the infimum of a continuous function on a compact set, so this infimum is attained at some $x\in F$.
\end{proof}

\begin{remark}
The problem of finding the best approximation to $T$ in the (linear) plane $G_k=\langle E,I\rangle$ is equivalent to finding the vector with smallest norm in the (affine) plane $H=G_k-T=\{v-T:v\in G_k\}$, we will say that such a vector is {\em minimal} in $H$. For the sake of clarity, we will consider the latest way of stating the problem in the following result. 
\end{remark}

\begin{proposition}\label{rrrx}
Let $x\in H$ be such that $\|x\|_\infty=\min\{\|y\|_\infty:y\in H\}=r$. Then, there exist indices $1\leq i<j<m\leq n$ such that $x_i=-x_j=x_m=\pm r$. 

Moreover, if $x$ fulfills this condition, then it is minimal and it is the unique  minimal element in $H$. 
\end{proposition}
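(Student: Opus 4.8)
The plan is to read Proposition \ref{rrrx} as the Chebyshev alternation (equioscillation) theorem for the two--dimensional space $G_k=\langle E,I\rangle$. The one structural feature I will use throughout is that a nonzero combination $\alpha e^{kt}+\beta$ has at most one vanishing coordinate along $t_1<\cdots<t_n$: if $\alpha\ne0$ it is strictly monotone in $t$ and has a single zero, and if $\alpha=0$ it is a nonzero constant. Since $x,y\in H$ forces $x-y\in G_k$, every difference of two elements of $H$ has exactly this form, so the whole argument will consist in reading sign information off the at-most-one sign change of $\alpha e^{kt}+\beta$.

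First I would dispatch the ``moreover'' part. Assume $x\in H$ has indices $i<j<m$ with $x_i=-x_j=x_m=\pm r$ and $\|x\|_\infty=r$, and let $y\in H$ satisfy $\|y\|_\infty\le r$. Writing $x-y=\alpha E+\beta I\in G_k$ and $g(t)=\alpha e^{kt}+\beta$, the coordinates of $d:=x-y$ are $d_\ell=g(t_\ell)$. At each extremal coordinate the sign of $d_\ell$ agrees weakly with that of $x_\ell$: if $x_i=r$ then $d_i=r-y_i\ge r-\|y\|_\infty\ge0$, and if $x_j=-r$ then $d_j\le0$, and so on. Thus $g$ is $\ge0,\le0,\ge0$---or the reverse---at $t_i<t_j<t_m$. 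When $\|y\|_\infty<r$ these inequalities are strict, so $g$ would be $>0,<0,>0$ at three ordered nodes, which is impossible for a constant or strictly monotone function; hence no competitor of smaller norm exists and $x$ is minimal. When $\|y\|_\infty=r$, the same weak pattern still cannot be realised unless $g\equiv0$, forcing $d=0$ and $y=x$, so the minimal element is unique.

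For the necessity---the harder direction---I would argue by contraposition, constructing an explicit descent direction. Let $x$ be minimal with $\|x\|_\infty=r$ and set $A_+=\{i:x_i=r\}$, $A_-=\{i:x_i=-r\}$, so $A_+\cup A_-$ is exactly the set of coordinates of modulus $r$. Suppose no alternating triple exists; then, reading the extremal indices in increasing order, their signs change at most once, so after possibly interchanging $A_+$ and $A_-$ every index of $A_+$ precedes every index of $A_-$. Since $t\mapsto e^{kt}$ is strictly monotone, this separation of indices is a strict separation of the associated $e$--values: there is a constant $c$ with $e^{kt_i}>c$ for all $i\in A_+$ and $e^{kt_i}<c$ for all $i\in A_-$. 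Taking $d=\alpha(E-cI)$ with $\alpha<0$ gives $d_i<0$ on $A_+$ and $d_i>0$ on $A_-$, so for small $s>0$ the extremal coordinates of $x+sd$ strictly decrease in modulus while the non--extremal ones, having modulus $<r$, stay below $r$; thus $\|x+sd\|_\infty<r$, contradicting minimality. The degenerate cases $A_+=\emptyset$ or $A_-=\emptyset$ are even easier, using $d=I$ or $d=-I$ respectively.

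The main obstacle is this necessity step: converting the combinatorial hypothesis that no three coordinates alternate into the geometric separation above, and then exhibiting one direction in $G_k$ that simultaneously reduces the error at every extremal coordinate. Conceptually the cleanest formulation is that $x$ is minimal if and only if $0$ lies in the convex hull of the signed points $\{\sigma_i(e^{kt_i},1):i\in A_+\cup A_-\}$, where $\sigma_i$ is the sign of $x_i$; a separation theorem then shows this membership fails precisely when the $e$--value ranges of $A_+$ and $A_-$ are disjoint, which is exactly the configuration yielding the descent direction. I would present the elementary ``separate the indices and take $d=\alpha(E-cI)$'' argument in the text and use the convex-hull picture only for motivation, being careful throughout to distinguish strict from non-strict inequalities and to justify the small-$s$ estimate.
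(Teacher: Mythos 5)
Your proposal is correct, and it reaches the result by a genuinely different route than the paper. The paper argues both directions with bespoke coordinate perturbations: for necessity it runs three separate cases (recentering by a multiple of $I$ to force $r^+=-r^-$; a two-point perturbation $v\in G_k$ with $v_i<0<v_j$ to exclude the case of exactly two extremal coordinates; and the shift $x-\varepsilon E$ followed by recentering to exclude the ``all maxima before all minima'' configuration), while for the ``moreover'' part it verifies directly that $\|x+\lambda E+\mu I\|_\infty>r$ for every $(\lambda,\mu)\neq(0,0)$ by exhibiting two coordinates of $x+\lambda E$ whose difference exceeds $2r$. You instead treat the statement as Chebyshev equioscillation for the Haar-type pair $\{e^{kt},1\}$: sufficiency and uniqueness come from counting sign changes of $d=x-y\in G_k$ (a constant or strictly monotone $g$ cannot be $\geq 0,\leq 0,\geq 0$ at three ordered nodes unless it vanishes identically), and necessity comes from one unified descent direction $\alpha(E-cI)$, with $c$ strictly separating the $E$-values of the positive and negative extremal index sets, the one-signed degenerate cases being handled by $\pm I$. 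Your single construction subsumes the paper's three necessity cases, your sign-counting step is exactly the mechanism the paper re-derives by hand in coordinates, and your argument generalizes verbatim to any strictly monotone kernel in place of $e^{kt}$; the paper's version is more pedestrian but fully self-contained. One wording repair: ``interchanging $A_+$ and $A_-$'' is not quite licit, since these sets are determined by $x$; in the case where the negative extremal indices come first, keep the same separating constant $c$ and simply take $\alpha>0$ instead of $\alpha<0$.
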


\begin{proof}
Suppose $x$ is minimal and let $r=\|x\|_\infty$. By the very definition of the $\max$-norm, there exists some $i\in\{1,\ldots,n\}$ such that $x_i=\pm r$. Let $r^+=\max\{x_1,\ldots,x_n\}$ and $r^-=\min\{x_1,\ldots,x_n\}$. If $r^+\neq -r^-$, then we may suppose that $r^+= -r^-+\alpha $ for some $\alpha >0$, being the opposite case very similar. Obviously $\|x-\alpha /2\cdot{I}\|_\infty=r-\alpha /2<r$ and $x-\alpha /2\cdot{I}\in H$, so we have a contradiction that shows that $r^+=-r^-$: there must exist $i, j$ such that $x_i=-x_j=\pm r$. 

Suppose that $x_i=-x_j=r$, $|x_m|<r$ for every $m\not\in\{i, j\}$ and take $v\in G_k$ such that $v_i<0, v_j>0$ --it is pretty clear that there exists such a $v$. Now, there exists $\e>0$ such that $|x_m|\leq r-\e$ for every $m\not\in\{i, j\}$, so we may take $\delta>0$ such that
$$
|x_m+\beta v_m|<r,\ \forall\ m\not\in\{i, j\}, \beta\in[0,\delta].
$$
It is clear that we can manage to find $\beta\in[0,\delta]$ such that $|x_i+\beta v_i|<r$ and $|x_j+\beta v_j|<r.$ So, $x+\beta v\in H$ and $\|x+\beta v\|_\infty<\|x\|_\infty$. 

So, for $x$ to be minimal, there must exist another index $m$ such that $x_m=\pm r$. Suppose that $i, j$ and $m$ do not fulfill the hypothesis, i.e., all the maxima lie before (or after) every minimum. We may and do assume $i< m$, $x_i=-x_m=r,$ with $x_j<r$ for every $j>i$ and $x_j>-r$ for every $j<m$, the other case is analogous. Take $\e>0$ small enough to keep $x_i-\e E_i$ as the greatest coordinate of $y=x-\e E$ and $x_m-\e E_m$ as its smallest. As $E_l>E_{l+1}>0$ for every $l=1,\ldots,n-1$, we have $r-\max\{y_1,\ldots,y_n\}=\e E_i > \e E_m = -r-\min\{y_1,\ldots,y_n\}>0$, so 
$$
\max\{y_1,\ldots,y_n\}-\min\{y_1,\ldots,y_n\}<r+r=\max\{x_1,\ldots,x_n\}-\min\{x_1,\ldots,x_n\}.
$$ 

Now, take $\delta=\frac 12(\max\{y_1,\ldots,y_n\}+\min\{y_1,\ldots,y_n\})$ and the vector $y-\delta I$. Now it is clear that 
$$
\|y-\delta I\|_\infty=\frac 12(\max\{y_1,\ldots,y_n\}-\min\{y_1,\ldots,y_n\})<r
$$ 
and $y-\delta I\in H,$ so $x$ was not a minimal element in $H$ and we have finished the first part of the proof. 

For the inverse implication, take $x\in H$ and $1\leq i<j<m\leq n$ such that $x_i=-x_j=x_m=\|x\|_\infty$. Now, $E_i>E_j>E_m$ implies $(x+\lambda E)_i-(x+\lambda E)_j>2\|x\|_\infty$ for every $\lambda>0$ and $(x+\lambda E)_m-(x+\lambda E)_j>2\|x\|_\infty$ for every $\lambda<0$. It is clear that in both cases $\|x+\lambda E+\mu I\|_\infty>\|x\|_\infty$ for any real $\mu$. For $\lambda=0$ the latest inequality is also true for every $\mu\neq 0$, so we are done. 
\end{proof}

\begin{remark}\label{remark_constante}
The description of the best approximations provided in the above proposition will be more useful than it seems. Please observe that it implies that the best approximation will be a constant only when there are two indices of $T$ where the maximum or the minimum is attained, and some minimum (resp. maximum) must be between two maxima (resp. minima). 

\end{remark}

\subsection{Best approximation for every $k$}
\begin{remark}\label{sinconstantes}
In this subsection, we will suppose that $T$ is such that some exponential approximates it better than any constant, the other case is vacuous. Indeed, if there exist $k\in(-\infty,0), b_0\in\R$ such that 
$$
\|(b_0-T_1),\ldots,(b_0-T_n)\|_\infty\leq \|(a\exp(kt_1)+b-T_1,\ldots,a\exp(kt_n)+b-T_n)\|_\infty
$$ 
for every $a, b\in\R$ then Proposition~\ref{rrrx} ensures that there is a minimum between two maxima (o a maximum between two minima), so this will happen for {\em every} $k\in(-\infty,0)$ and we have nothing to do. 
\end{remark}

\begin{remark}
This is one of the two main advantages of working with the $\max$-norm instead of the Euclidean norm, the easy way to determine whether $T$ is a good point to be approximated or not. 
\end{remark}

Before we continue, let us see a property of the family of approximants that will be key from now on. Please observe that every triple $(a,b,k)\in\R\times\R\times\R^-$ determines not only an element $(ae^{kt_1}+b,\cdots,ae^{kt_n}+b) \in \G$ but a function, too: $ae^{kt}+b$. We can consider then the family of functions
\begin{equation*}
\GG= \{ ae^{kt}+b : (ae^{kt_1}+b,\cdots,ae^{kt_n}+b) \in \G \}.
\end{equation*} 
Obviously, evaluating each non constant function of $\GG$ in $t_1,\cdots,t_n$ determines a unique vector in $\G$. Now, our immediate goal is to show that, in some sense, the `inverse' implication also holds: every vector in $\G$ determines a triple $(a,b,k)$, which in turn determines a function in $\GG$, as long as $n \geq 3$.

\begin{remark}
Because of Remark~\ref{sinconstantes}, we will need to deal just with, say, non constant exponentials, so whenever we have $f(t)=a\exp(kt)+b$ we will assume that $a\neq 0$ for the remainder of the subsection. Please observe that, if we deal with a constant vector $T=(b,\ldots,b)$, then 
$f(t)=a\exp(kt)+b$, with $a=0,$ is a function that fulfills $f(t_i)=T_i$ for every $i=1,\ldots,n$, no matter the value of $k$.
\end{remark}

Let $f_1(t)=a_1\exp(k_1t)+b_1, f_3(t)=a_3\exp(k_3t)+b_3$. Recall that $a_1, a_3\in\R^-\cup\R^+, b_1, b_3\in\R$ and $k_1, k_3\in\R^-$. 

\begin{lemma}
If there are two different $s_1, s_2\in\R$ such that $f_1'(s_1)=f_3'(s_1)$ and $f_1'(s_2)=f_3'(s_2)$, then $a_3=a_1$ and $k_3=k_1$. If, furthermore, $f_1$ and $f_3$ agree at some point, then they agree everywhere and $b_3=b_1$.
\end{lemma}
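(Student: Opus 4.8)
The plan is to work directly with the derivatives. Since $f_1(t)=a_1\exp(k_1t)+b_1$ and $f_3(t)=a_3\exp(k_3t)+b_3$, we have $f_1'(t)=a_1k_1\exp(k_1t)$ and $f_3'(t)=a_3k_3\exp(k_3t)$. Because $a_1,a_3\neq 0$ and $k_1,k_3\in\R^-$ are in particular nonzero, each derivative is nowhere vanishing, and the leading constants $a_1k_1$ and $a_3k_3$ are themselves nonzero. The hypothesis then says that $a_1k_1\exp(k_1s_j)=a_3k_3\exp(k_3s_j)$ holds for both $j=1$ and $j=2$.

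First I would divide the equation at $s_1$ by the equation at $s_2$. This is legitimate precisely because no factor involved vanishes, and it cancels the constants $a_1k_1$ and $a_3k_3$, leaving $\exp\bigl(k_1(s_1-s_2)\bigr)=\exp\bigl(k_3(s_1-s_2)\bigr)$. Taking logarithms gives $k_1(s_1-s_2)=k_3(s_1-s_2)$, and since $s_1\neq s_2$ this forces $k_1=k_3$. With $k_1=k_3$ in hand I would substitute back into either of the two equalities: it becomes $a_1k_1\exp(k_1s_j)=a_3k_1\exp(k_1s_j)$, and dividing by the nonzero factor $k_1\exp(k_1s_j)$ yields $a_1=a_3$. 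This establishes the first assertion. Equivalently, one could phrase the whole argument through the ratio $f_1'/f_3'$, which is a nonzero constant multiple of $\exp\bigl((k_1-k_3)t\bigr)$: the hypothesis says this function equals $1$ at two distinct points, which forces it to be the constant function $1$, giving the same conclusion.

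For the second statement, suppose additionally that $f_1(s_0)=f_3(s_0)$ at some point $s_0$. Since we have already shown $a_1=a_3$ and $k_1=k_3$, the exponential parts of $f_1$ and $f_3$ coincide identically, so the equality at $s_0$ collapses to $b_1=b_3$; hence $f_1\equiv f_3$ on all of $\R$. I do not expect any genuine obstacle here: the argument is elementary, and the only point demanding care is to verify that every quantity we divide by is nonzero, which is exactly what the standing assumptions $a_i\neq 0$ and $k_i<0$ guarantee.
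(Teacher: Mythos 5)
Your proof is correct and follows essentially the same route as the paper: both arguments cancel the nonzero constants $a_1k_1$ and $a_3k_3$ between the two derivative equations, use injectivity of the exponential together with $s_1\neq s_2$ to force $k_1=k_3$, back-substitute to obtain $a_1=a_3$, and then reduce the final claim to the observation that equal exponential parts plus agreement at one point give $b_1=b_3$. The only difference is cosmetic (you divide the two equations by each other, the paper divides each by $a_3k_3e^{k_3s_j}$), so there is nothing further to add.
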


\begin{proof}
As $f_1'(t)=a_1k_1\exp(k_1t)$ and $f_3'(t)=a_3k_3\exp(k_3t)$, our hypotheses can be rewriten as 
$$
a_1k_1e^{k_1s_1}=a_3k_3e^{k_3s_1},\quad a_1k_1e^{k_1s_2}=a_3k_3e^{k_3s_2},
$$
or equivalently,
$$
e^{(k_3-k_1)s_1} = \frac{a_1k_1}{a_3k_3} = e^{(k_3-k_1)s_2}.
$$
As $s_1\neq s_2,$ this readily implies $k_3=k_1$, so $\exp((k_3-k_1)s_1)=1$ and this implies $a_3=a_1$. So, $f_1'=f_3'$ and this means that $f_1=f_3$ if and only if they agree at some point.
\end{proof}

\begin{corollary}\label{coinciden3} $f_1$ and $f_3$ agree everywhere if any of the following conditions holds:
\begin{enumerate}
\item There exists $c_0$ such that $f_1(c_0)=f_3(c_0)$, $f_1'(c_0)=f_3'(c_0)$ and $f_1''(c_0)=f_3''(c_0)$.
\item They agree at two points, and are tangent at one of them.
\item They agree at three points.
\end{enumerate}
\end{corollary}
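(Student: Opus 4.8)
The plan is to reduce each of the three conditions to the hypotheses of the preceding Lemma, whose conclusion ($a_1=a_3$, $k_1=k_3$, and---given one coincidence of values---$f_1\equiv f_3$) is exactly what we want. The Lemma asks for two \emph{distinct} points at which the first derivatives agree, together with a single point where the functions agree; so in each case I would manufacture such data and then quote the Lemma. The only analytic tool needed beyond the Lemma is Rolle's theorem, applied to the smooth function $g=f_1-f_3$.

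For condition (3), suppose $f_1$ and $f_3$ agree at $x_1<x_2<x_3$. Then $g$ vanishes at these three points, so Rolle's theorem yields $s_1\in(x_1,x_2)$ and $s_2\in(x_2,x_3)$ with $g'(s_1)=g'(s_2)=0$; since $g'=f_1'-f_3'$, the first derivatives coincide at $s_1$ and $s_2$. As $s_1<x_2<s_2$ these points are distinct, and $f_1(x_1)=f_3(x_1)$ supplies the required coincidence of values, so the Lemma applies. Condition (2) is the same argument with one derivative-agreement point given for free: if $f_1$ and $f_3$ agree at $x_1\neq x_2$ and are tangent at, say, $x_1$, then $f_1'(x_1)=f_3'(x_1)$ already provides one point, and a single application of Rolle's theorem to $g$ on $[x_1,x_2]$ produces a second point $s\in(x_1,x_2)$, necessarily distinct from $x_1$, with $f_1'(s)=f_3'(s)$; the Lemma then finishes as before.

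The genuinely different---and mildly subtle---case is (1), because it provides information at a \emph{single} point $c_0$ and so cannot be fed straight into the Lemma, which needs two distinct abscissae. Here I would instead compute directly, exploiting that $f_i''=k_if_i'$. Writing out $f_1'(c_0)=f_3'(c_0)$ and $f_1''(c_0)=f_3''(c_0)$ gives $a_1k_1e^{k_1c_0}=a_3k_3e^{k_3c_0}$ and $a_1k_1^2e^{k_1c_0}=a_3k_3^2e^{k_3c_0}$; since $a_i\neq 0$ and $k_i\in\R^-$ force both sides of the first equation to be nonzero, dividing the second by the first yields $k_1=k_3$, whence $a_1=a_3$ from the first equation and finally $b_1=b_3$ from $f_1(c_0)=f_3(c_0)$. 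Thus $f_1\equiv f_3$.

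The main things to watch are minor: in (2) and (3) one must check that the points produced by Rolle are honestly distinct, which is immediate since they lie in disjoint open intervals (case (3)) or strictly inside an interval whose endpoint is the given tangency point (case (2)); and in (1) one must verify that the common derivative value is nonzero so that the division is legitimate, which again follows at once from $a_i\neq 0$ and $k_i<0$. I therefore expect no real obstacle---the whole corollary is a packaging of the Lemma together with Rolle's theorem, with case (1) being the only one that bypasses the Lemma in favour of a two-line computation.
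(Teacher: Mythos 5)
Your proof is correct and follows essentially the same route as the paper: cases (2) and (3) are reduced to the preceding Lemma via Rolle's theorem applied to $f_1-f_3$, and case (1) is handled by the same direct computation (dividing the second-derivative identity by the first-derivative identity to get $k_1=k_3$, then $a_1=a_3$ and $b_1=b_3$). Your explicit attention to the distinctness of the Rolle points and the nonvanishing of the common derivative value are details the paper leaves implicit, but the argument is identical in substance.
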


\begin{proof}
The first item is equivalent to the following equalities:
$$
a_1 e^{k_1c_0}+b_1 = a_3 e^{k_3c_0}+b_3, \quad k_1 a_1 e^{k_1c_0}= k_3 a_3 e^{k_3c_0} \text{ and } k_1^2 a_1 e^{k_1c_0} = k_3^2 a_3 e^{k_3c_0}.
$$
The second and third equalities together imply that $k_3=k_1$, and this implies that $a_3=a_1$. Now it is readily seen that $b_3=b_1$.

For the second one, suppose there exist $c_1< c_2$ such that $f_1(c_1)=f_3(c_1)$,  $f_1(c_2)=f_3(c_2)$ and $f_1'(c_1)=f_3'(c_1)$. Then the Rolle's theorem ensures that there is some point $s\in(c_1, c_2)$ such that $f_1'(s)=f_3'(s)$, so we have two different points where $f_1'$ and $f_3'$ agree, namely $s$ and $c_1$. By the above lemma, this implies that $f_1'=f_3'$. As $f_1(c_1)=f_3(c_1)$, we are done. The case $f_1'(c_2)=f_3'(c_2)$ is analogous.

In the third case, suppose that there exist $c_1< c_2< c_3$ such that $f_1(c_1)=f_3(c_1), f_1(c_2)=f_3(c_2)$ and $f_1(c_3)=f_3(c_3)$. Again, the Rolle's theorem ensures that there exist $s_1\in(c_1,c_2)$ and $s_2\in(c_2,c_3)$ such that $f_1'(s_1)=f_3'(s_1)$ and $f_1'(s_2)=f_3'(s_2)$. Again, the previous lemma is enough to finish the proof.
\end{proof}

\begin{lemma}\label{existenciaexp}
Let be $c_1<c_2<c_3$ and $y_1> y_2> y_3$ real numbers such that 
$$
\frac{y_2-y_1}{c_2-c_1}\neq\frac{y_3-y_1}{c_3-c_1}.
$$ Then, there exist unique $a, b$ and $k$ in $\R$ such that 
$$
a e^{kc_i}+b=y_i \text{ for } i\in\{1, 2, 3\}. 
$$ 
\end{lemma}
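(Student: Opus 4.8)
The plan is to reduce the three interpolation conditions to a single scalar equation in $k$ and solve it by the intermediate value theorem, leaving uniqueness to Corollary~\ref{coinciden3}.

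First I would eliminate the linear parameters. Subtracting the first equation $ae^{kc_1}+b=y_1$ from the other two removes $b$ and yields $a(e^{kc_2}-e^{kc_1})=y_2-y_1$ and $a(e^{kc_3}-e^{kc_1})=y_3-y_1$. Writing $p=c_2-c_1>0$ and $q=c_3-c_1>0$ (so $0<p<q$) and factoring out $e^{kc_1}$, dividing one relation by the other eliminates $a$ and leaves the single equation
\[
g(k):=\frac{e^{kp}-1}{e^{kq}-1}=\frac{y_2-y_1}{y_3-y_1}=:\tau .
\]
Since $y_1>y_2>y_3$, the number $\tau=\frac{y_1-y_2}{y_1-y_3}$ lies in $(0,1)$. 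This passage is an equivalence: once a root $k\neq 0$ of $g(k)=\tau$ is found, setting $a=(y_3-y_1)/(e^{kc_3}-e^{kc_1})$ and $b=y_1-ae^{kc_1}$ recovers a solution of the original system, and $a\neq0$ because $y_3\neq y_1$.

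Next I would solve $g(k)=\tau$. The function $g$ is continuous on $\R\setminus\{0\}$, and at $k=0$ its numerator and denominator vanish to first order, so $g$ extends continuously with $g(0)=p/q=\frac{c_2-c_1}{c_3-c_1}$. The main computation is the behaviour at infinity: since $0<p<q$, one checks $\lim_{k\to+\infty}g(k)=0$ and $\lim_{k\to-\infty}g(k)=1$. As $\tau\in(0,1)$, the intermediate value theorem provides some $k$ with $g(k)=\tau$. Finally, the hypothesis $\frac{y_2-y_1}{c_2-c_1}\neq\frac{y_3-y_1}{c_3-c_1}$ is, after cross-multiplying, exactly the statement $\tau\neq p/q=g(0)$; hence any such root satisfies $k\neq0$, so $e^{kc_2}\neq e^{kc_1}$ and the recovered $a$ is a genuine nonzero coefficient. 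This establishes existence.

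For uniqueness I would invoke Corollary~\ref{coinciden3}. If two triples $(a,b,k)$ and $(a',b',k')$ both solved the system, the functions $ae^{kt}+b$ and $a'e^{k't}+b'$ would agree at the three points $c_1<c_2<c_3$, so by item~3 of that corollary they agree everywhere, forcing $a=a'$, $b=b'$ and $k=k'$. I expect the only delicate step to be the two limit computations for $g$ together with the continuity at $k=0$; once these are in place the intermediate value theorem closes existence and the corollary disposes of uniqueness. An alternative would be to prove that $g$ is strictly monotone, which would both locate $k$ and give its uniqueness directly, but the corollary makes this extra work unnecessary.
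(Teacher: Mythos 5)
Your proposal is correct and follows essentially the same route as the paper: both reduce the problem to a single scalar equation in $k$ (your $g(k)=\frac{e^{kp}-1}{e^{kq}-1}$ is just the reciprocal of the paper's $g_k(c_3)$, obtained by elimination rather than by first pinning an interpolating family at $(c_1,y_1)$ and $(c_2,y_2)$), compute the same three limits at $k\to\pm\infty$ and $k\to 0$, conclude existence by continuity and the intermediate value theorem, use the slope hypothesis to exclude $k=0$, and delegate uniqueness to Corollary~\ref{coinciden3}. The only cosmetic difference is that the paper additionally observes that injectivity plus continuity makes its function strictly decreasing, packaging existence and uniqueness of $k$ as a bijection, whereas you invoke the corollary for uniqueness directly; both arguments are sound.
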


\begin{proof}
The previous corollary ensures the uniqueness of $a, b$ and $k$, so we just need to show their existence. 

It is easy to check that, if $c_1< c_2$ and $y_1> y_2$, then, given $k\in\R, k\neq 0$, the function $f_k(t)=(y_2-y_1) g_k(t)+ y_1$, being 
$$
g_k(t)=\frac{e^{kt}-e^{kc_1}}{e^{kc_2}-e^{kc_1}}, \forall t \in \R,
$$ 
satisfies $f_k(c_1)=y_1, f_k(c_2)=y_2$ --of course this is also true if $y_1<y_2$. 

So far we have seen that for each $k\in \R\backslash \{0\}$ there is a function $f_k$ satisfying $f_k(c_1)=y_1$ and $f_k(c_2)=y_2$. Now, taking $c_3>c_2$, we shall prove that there exists an unique $k$ satisfying $f_k(c_i)=y_i$ for $i=1, 2$ and $3$.

Let us see what happens with $f_k(c_3)$ as we vary $k$. It is clear that the function 
$$
(-\infty,0)\cup(0,\infty)\ni k\mapsto f_k(c_3)\in(-\infty,y_2)
$$
is continuous. So, we must compute the limits of $f_k(c_3)$ when $k\to-\infty,$ $k\to0^-,$ $k\to 0^+$ and $k\to\infty$. 

Since $c_1<c_2<c_3$, it is clear that $g_k(c_3)$ goes to infinity when $k\to\infty$. As $y_2<y_1$, this means that $f_k(c_3)\to-\infty$. 

It is also clear that $g_k(c_3)\to 1$ when $k\to-\infty$, so $f_k(c_3)\to y_2$. 

For $k\to 0$, the limit of $g_k(c_3)$ can be easily computed via the L'H\^opital's rule to get 
$$
\lim_{k \to 0}g_k(c_3)=
\lim_{k \to 0}\frac{c_3 e^{kc_3}-c_1e^{kc_1}}{c_2e^{kc_2}-c_1e^{kc_1}} =
\frac{c_3-c_1}{c_2-c_1}
$$
and this implies that  
$$
\lim_{k\to 0} f_k(c_3)= \frac{(y_2-y_1)(c_3-c_1)}{c_2-c_1}+y_1.
$$
So, the function $\phi:\R\to(-\infty, y_2)$ defined as 
$$
k\mapsto\phi(k)=
\begin{cases}
f_k(c_3), &\mathrm{\ for\ } k\neq 0 \\
{(y_2-y_1)(c_3-c_1)}/(c_2-c_1)+y_1, &\mathrm{\ for\ } k=0
\end{cases}
$$
is continuous; and, by the previous corollary, it is also injective. As $\phi(k)$ tends to $-\infty$ when $k\to\infty$ and to $y_2$ when $k\to-\infty$, this implies that it is strictly decreasing. Therefore, for each $y_3\in(-\infty, y_2)$ there is exactly one $k\in\R$ such that $\phi(k)=y_3$. Besides, 
$$
\frac{y_2-y_1}{c_2-c_1}=\frac{y_3-y_1}{c_3-c_1} \qquad \Leftrightarrow \qquad y_3=\frac{(y_2-y_1)(c_3-c_1)}{c_2-c_1}+y_1=\phi(0),
$$ 
so for $c_1, c_2, c_3, y_1, y_2$ and $y_3$ satisfying the hypotheses there exist unique $a, b$ and $k$ such that 
$$
a e^{kc_i}+b=y_i \text{ for } i\in\{1, 2, 3\}. 
$$ 
\end{proof}

In order to prove that $\EI$ is a quasiconvex function, we will show that for every $k_1<k_2<k_3<0$, if $\FFk1$ and $\FFk3$ are the functions determined by $\F_{k_1}$ and $\F_{k_3}$, respectively, then we can find $(a_2 e^{k_2 t_1}+ b_2,\cdots,a_2 e^{k_2 t_n}+ b_2) \in G_{k_2}$ such that, for every $t_i \in \{t_1,\cdots,t_n\}$,
\begin{equation}\label{ec_desigualdad_cada_coordenada}
\begin{split}
|a_2 e^{k_2 t_i} + b_2 - T_i| & \leq \max \{|\FFk1 (t_i)-T_i|,|\FFk3(t_i)-T_i|\}\\ & \leq \max \{\EI (k_1),\EI (k_3)\}
\end{split}
\end{equation}
is satisfied and, therefore,
\[
\|(a_2 e^{k_2 t_1} + b_2,\cdots,a_2 e^{k_2 t_n} + b_2)-(T_1,\cdots,T_n)\|_{\infty} \leq \max \{\EI (k_1),\EI (k_3)\}.
\]
Hence, as
\begin{equation*}
\EI (k_2) \leq \|(a_2 e^{k_2 t_1} + b_2,\cdots,a_2 e^{k_2 t_n} + b_2)-T\|_{\infty} 
\end{equation*}
is ensured, mixing both inequalities we get 
\begin{equation}\label{ec_desigualdad_errores}
\EI (k_2) \leq\max \{\EI (k_1),\EI (k_3)\},
\end{equation}
i.e., $\EI$ is a quasiconvex function.

By Lemma \ref{coinciden3}, $\FFk1$ and $\FFk3$ agree at most in 2 points, so we will study separately what happens if $\FFk1$ and $\FFk3$ meet each other 0, 1 or 2 times. We will show that (\ref{ec_desigualdad_cada_coordenada}), and so (\ref{ec_desigualdad_errores}), hold in every case.

Before, we shall explicit a result that ensures that a function lying between two other functions is always a better approximation than the worst of them:

\begin{lemma} \label{l_enmedio}
Let be $c_1\leq c_2\leq c_3\in\R$. Then, 
\[
|c_2-\alpha|\leq\max\{|c_1-\alpha|,|c_3-\alpha|\},\forall\alpha\in\R.
\]
Moreover, if both inequalities are strict, then the last is, too.
\end{lemma}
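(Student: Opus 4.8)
The plan is to exploit the fact that $c_2$ lies in the interval $[c_1,c_3]$, so it is a convex combination $c_2=\lambda c_1+(1-\lambda)c_3$ with $\lambda\in[0,1]$ (explicitly $\lambda=(c_3-c_2)/(c_3-c_1)$ whenever $c_1<c_3$; the degenerate case $c_1=c_3$ forces $c_1=c_2=c_3$ and is trivial). Since $x\mapsto|x-\alpha|$ is convex, subtracting $\alpha$ from each term and applying the triangle inequality gives
\[
|c_2-\alpha|=|\lambda(c_1-\alpha)+(1-\lambda)(c_3-\alpha)|\leq\lambda|c_1-\alpha|+(1-\lambda)|c_3-\alpha|.
\]
A convex combination of two reals never exceeds their maximum, so the right-hand side is at most $\max\{|c_1-\alpha|,|c_3-\alpha|\}$, which settles the first assertion.

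Alternatively, and perhaps more transparently, one can argue by three cases on the position of $\alpha$ relative to $[c_1,c_3]$: if $\alpha\leq c_1$ then every $c_i-\alpha$ is nonnegative and $c_2-\alpha\leq c_3-\alpha$; if $\alpha\geq c_3$ then every $c_i-\alpha$ is nonpositive and $\alpha-c_2\leq\alpha-c_1$; and if $c_1\leq\alpha\leq c_3$ then $c_2-\alpha\leq c_3-\alpha=|c_3-\alpha|$ while $\alpha-c_2\leq\alpha-c_1=|c_1-\alpha|$, so $|c_2-\alpha|=\max\{c_2-\alpha,\alpha-c_2\}$ is dominated by the maximum. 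I would keep the convexity argument as the main line because it is shortest, relegating the case split to a parenthetical remark if needed.

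For the \emph{moreover} clause, assume $c_1<c_2<c_3$, so now $\lambda\in(0,1)$ strictly, and split into two subcases. If $|c_1-\alpha|\neq|c_3-\alpha|$, then a strict convex combination of two distinct nonnegative reals lies strictly below their maximum, so the chain above already yields the strict inequality. If instead $|c_1-\alpha|=|c_3-\alpha|$, then $\alpha$ must be the midpoint $(c_1+c_3)/2$; in that event $c_1-\alpha$ and $c_3-\alpha$ have opposite signs, and since $c_2$ is strictly interior one obtains $|c_2-\alpha|<(c_3-c_1)/2=|c_1-\alpha|$ directly.

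The only point requiring genuine care — and the single place where the naive convexity estimate is not by itself enough — is this last subcase where $|c_1-\alpha|=|c_3-\alpha|$, because there the inequality ``convex combination $\leq$ maximum'' degenerates to an equality and all the strictness must be extracted from the triangle inequality. Observing that equal distances force $\alpha$ to be the midpoint, so that the two displacements point in opposite directions and the triangle inequality is strict, is the crux; everything else is routine.
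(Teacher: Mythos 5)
Your proof is correct, but it takes a genuinely different route from the paper's. The paper argues by a two-case split on the position of $\alpha$ relative to $c_2$: if $\alpha\leq c_2$ then $|c_2-\alpha|=c_2-\alpha\leq c_3-\alpha=|c_3-\alpha|$, and if $\alpha>c_2$ then $|c_2-\alpha|=\alpha-c_2\leq\alpha-c_1=|c_1-\alpha|$; the strict version is then immediate, since $c_1<c_2<c_3$ makes the relevant inequality strict in whichever case one lands in — no subcases, no degenerate configuration to worry about. Your main line instead writes $c_2=\lambda c_1+(1-\lambda)c_3$ and invokes convexity of $x\mapsto|x-\alpha|$ plus the fact that a convex combination is dominated by the maximum. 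This is more conceptual and generalizes verbatim to any normed space (a point on a segment is never farther from $\alpha$ than the worse endpoint), which the paper's coordinate argument does not; the price is exactly the complication you identified: when $|c_1-\alpha|=|c_3-\alpha|$ the estimate ``convex combination $\leq$ maximum'' collapses to equality, forcing you to treat the midpoint case separately via the strict triangle inequality (or, as you do, directly via $|c_2-\alpha|<(c_3-c_1)/2$). Your handling of that subcase is sound — equal distances with $c_1<c_3$ do force $\alpha=(c_1+c_3)/2$, and strict interiority of $c_2$ then gives the strict bound — but it is worth noting that the paper's humbler case split makes the ``moreover'' part a one-liner precisely because it never routes the strictness through a bound that can degenerate. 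Your parenthetical three-case alternative is essentially the paper's argument, only split on $\alpha$ relative to $[c_1,c_3]$ rather than relative to $c_2$.
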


\begin{proof}

If $\alpha\leq c_2$, then 
\[
|c_2-\alpha|=c_2-\alpha\leq c_3-\alpha=|c_3-\alpha|\leq 
\max\{|c_1-\alpha|,|c_3-\alpha|\}.
\]
If $\alpha>c_2$, then $|c_2-\alpha|=\alpha-c_2\leq\alpha-c_1=|c_1-\alpha|\leq 
\max\{|c_1-\alpha|,|c_3-\alpha|\}$, too. 
Hence, in every possible case, the result holds. As for the moreover part, if $c_1<c_2<c_3$, then $c_2-\alpha<c_3-\alpha$ and $\alpha-c_2<\alpha-c_1$, and this implies $|c_2-\alpha|<\max\{|c_1-\alpha|,|c_3-\alpha|\}$. 

\end{proof}

\begin{corollary}\label{c_enmedio}
Let $f_1, f_2$ and $f_3$ be such that $\min\{f_1(t), f_3(t)\}\leq f_2(t)\leq\max\{f_1(t), f_3(t)\}$ for every $t\in\R$. Then 
$$
\|(T_1-f_2(t_1),\ldots,T_n-f_2(t_n))\|_\infty\leq
$$
$$
\leq\max\{\|(T_1-f_1(t_1), \ldots,T_n-f_1(t_n))\|_\infty,\|(T_1-f_3(t_1),\ldots,T_n-f_3(t_n))\|_\infty\}.
$$ 
for every $(T_1,\ldots,T_n), (t_1,\ldots,t_n)\in\R^n$. 
\end{corollary}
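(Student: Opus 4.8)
The plan is to deduce the corollary directly from Lemma~\ref{l_enmedio} by working one coordinate at a time. Since $\|\cdot\|_\infty$ is just the maximum of the absolute values of the coordinates, it is enough to bound each scalar $|T_i-f_2(t_i)|$ and then take the maximum over $i\in\{1,\ldots,n\}$.

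First I would fix an index $i$ and consider the three real numbers $f_1(t_i), f_2(t_i), f_3(t_i)$. The hypothesis tells us that $f_2(t_i)$ lies between the other two, but not which endpoint is the larger; so, to match the ordered hypothesis $c_1\le c_2\le c_3$ of Lemma~\ref{l_enmedio}, I would set $c_1=\min\{f_1(t_i),f_3(t_i)\}$, $c_2=f_2(t_i)$ and $c_3=\max\{f_1(t_i),f_3(t_i)\}$. The sandwiching hypothesis then gives precisely $c_1\le c_2\le c_3$, and applying the lemma with $\alpha=T_i$ yields
\[
|T_i-f_2(t_i)|\le\max\{|T_i-f_1(t_i)|,|T_i-f_3(t_i)|\},
\]
the relabeling being invisible on the right-hand side because $\max$ is symmetric in its two arguments.

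Finally, each term on the right is at most the corresponding full $\max$-norm, so the right-hand side is bounded above by $\max\{\|(T_1-f_1(t_1),\ldots,T_n-f_1(t_n))\|_\infty,\|(T_1-f_3(t_1),\ldots,T_n-f_3(t_n))\|_\infty\}$, which no longer depends on $i$. Taking the maximum over $i$ on the left then delivers the claimed inequality. I do not expect any genuine obstacle here: the only point requiring a moment's care is the relabeling needed to present the triple in the increasing order demanded by Lemma~\ref{l_enmedio}, after which the conclusion is immediate from the coordinatewise definition of the $\max$-norm.
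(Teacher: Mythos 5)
Your proof is correct and follows exactly the route the paper intends: the paper's own proof of Corollary~\ref{c_enmedio} is the one-line remark that it is straightforward from Lemma~\ref{l_enmedio}, and your argument is precisely the fleshed-out version of that — apply the lemma coordinatewise (with the $\min$/$\max$ relabeling to meet the ordered hypothesis $c_1\le c_2\le c_3$ and $\alpha=T_i$), bound each right-hand term by the corresponding $\|\cdot\|_\infty$ norm, and take the maximum over $i$. No gaps.
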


\begin{proof}
It is straightforward from the above lemma. 
\end{proof}

\begin{proposition}\label{f1f2f3}
Let $k_1< k_2< k_3<0$ and $\FFk1, \FFk3$ be the functions determined by the best approximations for $k_1$ and $k_3$. Then, there exist $a_2, b_2$ such that $\min\{\FFk1(t), \FFk3(t)\}\leq a_2\exp(k_2t)+b_2\leq\max\{\FFk1(t), \FFk3(t)\}$ 
for every $t\in\R$. 
\end{proposition}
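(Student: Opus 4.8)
The plan is to recast the sandwich condition geometrically and then settle it by a monotonicity-in-$k$ argument, splitting according to the number of times $\FFk1$ and $\FFk3$ meet. By Corollary~\ref{coinciden3} these two non-constant exponentials agree in at most two points, so there are only three cases: zero, one, or two crossings. I would first record the elementary reformulation that, writing $f_2(t):=a_2\exp(k_2t)+b_2$, the requirement $\min\{\FFk1(t),\FFk3(t)\}\le f_2(t)\le\max\{\FFk1(t),\FFk3(t)\}$ is exactly the requirement that $f_2(t)$ lie in the closed interval with endpoints $\FFk1(t)$ and $\FFk3(t)$. In particular, at any point $p$ with $\FFk1(p)=\FFk3(p)$ this interval collapses and \emph{forces} $f_2(p)=\FFk1(p)$; these are precisely the crossing points. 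I would also note that each $\FFk{i}$, being a non-constant exponential, is strictly monotone, so two distinct crossings necessarily carry two distinct heights.

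The heart of the proof is the two-crossing case $p<q$, with $y_p:=\FFk1(p)=\FFk3(p)$, $y_q:=\FFk1(q)=\FFk3(q)$, and $y_p\neq y_q$ by the strict monotonicity just noted. For each $k<0$ let $h_k$ be the unique exponential of rate $k$ through $(p,y_p)$ and $(q,y_q)$ (its coefficients solve a $2\times2$ linear system with nonzero determinant $e^{kp}-e^{kq}$, exactly the construction used in Lemma~\ref{existenciaexp}). Then $h_{k_1}=\FFk1$ and $h_{k_3}=\FFk3$ by uniqueness, and I would \emph{define} $f_2:=h_{k_2}$, thereby fixing $a_2,b_2$. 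Now fix any $t_0\notin\{p,q\}$ and consider $k\mapsto h_k(t_0)$: it is continuous in $k$, and it is injective because two exponentials $h_k,h_{k'}$ taking the same value at $t_0$ would agree at the three points $p,q,t_0$ and hence coincide by Corollary~\ref{coinciden3}, forcing $k=k'$. A continuous injection on an interval is strictly monotone, so from $k_1<k_2<k_3$ we get that $h_{k_2}(t_0)=f_2(t_0)$ lies strictly between $\FFk1(t_0)$ and $\FFk3(t_0)$; at $t_0\in\{p,q\}$ equality holds. This is precisely the desired sandwich, and it completes the two-crossing case cleanly.

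For one crossing or none, the same family is unavailable since $\FFk1$ and $\FFk3$ share fewer than two points, so the leftover freedom in $(a_2,b_2)$ must be spent elsewhere. Here I would interpolate $f_2$ at the single available crossing (if any) and use the remaining degree of freedom to control the behaviour as $t\to\pm\infty$, where every admissible exponential tends to its constant term, so as to keep $f_2$ between $\FFk1$ and $\FFk3$ at infinity too. To upgrade this local control to a global statement I would study the difference functions $u:=f_2-\FFk1$ and $v:=\FFk3-f_2$: each is a combination of three distinct exponentials (one of them the constant $1$), hence has at most two zeros, and a short computation with $u'$ and $v'$ shows each has at most one critical point, so its sign pattern is rigidly pinned down by its boundary behaviour together with its prescribed zero at the crossing. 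I expect this to be the main obstacle: with only one (or no) forced interpolation condition the monotone-family sandwich does not apply, so one must instead choose $a_2,b_2$ so that $u$ and $v$ take the correct sign on each side of the crossing and do not acquire a spurious second zero that would push $f_2$ out of the band near $\pm\infty$. The signs of the leading coefficients $a_1,a_3$ and of $b_1-b_3$ determine which sub-configurations occur, and I would dispose of them one by one using the two-zero/one-critical-point bound above.
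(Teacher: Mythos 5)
Your treatment of the two-crossing case is correct, and it is a genuinely different (and cleaner) argument than the paper's: both proofs take $f_2$ to be the rate-$k_2$ exponential interpolating the two crossing points, but where the paper argues via non-tangency, sign changes at the crossings and the ordering of the three functions near infinity, you get the sandwich in one stroke from the strict monotonicity of the continuous injection $k\mapsto h_k(t_0)$, injectivity being exactly Corollary~\ref{coinciden3}. That part stands. Your sketch for a single \emph{transversal} crossing (interpolate at the crossing, control the behaviour at $\pm\infty$, then pin signs down with the at-most-two-zeros bound) is also essentially the paper's ``genuine case'' and is completable, though you do not carry it out.

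The gap is in the remaining configurations, and it is structural rather than technical: you never use the hypothesis that $\FFk1$ and $\FFk3$ are \emph{best approximations} to $T$, and that hypothesis is precisely what disposes of the cases you yourself flag as ``the main obstacle''. The paper proves (Lemma~\ref{mejoressecruzan}) that two best approximations must take values strictly on both sides of one another: by Proposition~\ref{rrrx} there are indices $i,j$ with $T_i=\FF_{k_1}(t_i)+\EI(k_1)$ and $T_j=\FF_{k_3}(t_j)-\EI(k_3)$, and if $\FF_{k_1}\geq\FF_{k_3}$ everywhere, combining these with the definition of $\EI$ forces $\EI(k_3)\geq\EI(k_1)$ and $\EI(k_1)\geq\EI(k_3)$ with one inequality strict --- a contradiction. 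This rules out both the zero-crossing case and the tangential ``touch without crossing'' case (the paper's situation~(\ref{ec_1_corte_solo_toca})), so a construction is only ever needed when the two curves genuinely cross. Working with arbitrary exponential pairs instead, you are left having to construct the sandwich in those cases, and your plan there does not work as stated. In the tangential case it is inconsistent: if $\FFk1\geq\FFk3$ with equality exactly at $s_1$, then any $f_2$ trapped between them has $f_2(s_1)$ equal to the common value, so $f_2-\FFk1\leq 0$ attains an interior maximum at $s_1$ and $f_2-\FFk3\geq 0$ an interior minimum, forcing $f_2'(s_1)=\FFk1'(s_1)=\FFk3'(s_1)$; both degrees of freedom $(a_2,b_2)$ are thus consumed at $s_1$, and there is no ``remaining degree of freedom to control the behaviour as $t\to\pm\infty$''. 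In the no-crossing case you offer only a programme (``dispose of them one by one''), not an argument. To close the proof you must either invoke the best-approximation structure as the paper does, or genuinely execute the constructions for these configurations (for instance, the tangential case can be settled with the forced common-tangent exponential together with your two-zero bound, and the strict no-crossing case reduces to the tangential one by translating $\FFk3$ vertically by the minimum of the gap). As written, the proposal fully proves only the two-crossing case.
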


Before the proof, we will explicit this consequence: 

\begin{corollary}\label{csquasiconvex}
If the hypotheses in the proposition are fulfilled, then 
\begin{equation*}
\EI(k_2)<\max\{\EI(k_1),\EI(k_3)\}.
\end{equation*} 
\end{corollary}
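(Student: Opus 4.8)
The plan is to take the sandwiched exponential produced by Proposition~\ref{f1f2f3} and upgrade the non-strict bound of Corollary~\ref{c_enmedio} to a strict one. Write $f_1=\FFk1$, $f_3=\FFk3$ and let $f_2(t)=a_2e^{k_2t}+b_2$ be the function given by the proposition, so that $\min\{f_1(t),f_3(t)\}\le f_2(t)\le\max\{f_1(t),f_3(t)\}$ for every $t\in\R$. Its sample vector $a_2E+b_2I$ lies in $G_{k_2}$, whence $\EI(k_2)\le\|(f_2(t_1),\ldots,f_2(t_n))-T\|_\infty$, while Corollary~\ref{c_enmedio} gives $\|(f_2(t_1),\ldots,f_2(t_n))-T\|_\infty\le M$, where $M:=\max\{\EI(k_1),\EI(k_3)\}$. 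Thus $\EI(k_2)\le M$, and since $\EI(k_2)\le M$ always holds, it suffices to exclude the equality $\EI(k_2)=M$.

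First I would locate where strictness is automatic and isolate the problematic coordinates. At every sample point $t_i$ with $f_1(t_i)\ne f_3(t_i)$ at which $f_2(t_i)$ lies \emph{strictly} between $f_1(t_i)$ and $f_3(t_i)$, the \emph{moreover} clause of Lemma~\ref{l_enmedio} yields $|f_2(t_i)-T_i|<\max\{|f_1(t_i)-T_i|,|f_3(t_i)-T_i|\}\le M$. Hence the value $M$ can only be reproduced by $f_2$ at the at most two points where $f_1$ and $f_3$ agree (Corollary~\ref{coinciden3}) or at points where $f_2$ actually touches $f_1$ or $f_3$. The strategy is to show that such forced touchings cannot coexist with the \emph{global} sandwich.

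Assume, for contradiction, $\EI(k_2)=M$. Then $\|(f_2(t_1),\ldots,f_2(t_n))-T\|_\infty=M$, so $f_2$ is itself a best approximation in $G_{k_2}$; by the uniqueness and the extremal description of Proposition~\ref{rrrx}, its error vector equioscillates, i.e.\ there are sample points $t_p<t_q<t_r$ at which $f_2(t_\bullet)-T_\bullet$ takes the values $+M,-M,+M$ (the reverse pattern being symmetric). At $t_p$ and $t_r$ the bounds $|f_1-T|,|f_3-T|\le M$ force $f_2=\max\{f_1,f_3\}$, and at $t_q$ they force $f_2=\min\{f_1,f_3\}$; in particular $f_2$ coincides with one of $f_1,f_3$ at each of the three points. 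Setting $g_1:=f_2-f_1$ and $g_3:=f_2-f_3$, the sandwich says $g_1g_3\le0$ on all of $\R$, whereas the three prescribed values give $g_1\ge0,\ \le0,\ \ge0$ and $g_3\ge0,\ \le0,\ \ge0$ at $t_p,t_q,t_r$ respectively.

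The crux is then a zero-counting argument. Each of $g_1,g_3$ is a difference of two exponentials of distinct rates plus a constant, so by Corollary~\ref{coinciden3} it vanishes at most twice, and a tangential zero together with any further zero would force the two functions to be identical, which is impossible since the rates differ. Consider $t_q$: if $f_2$ touches $f_1$ there while $g_3(t_q)<0$, then $g_1g_3\le0$ forces $g_1\ge0$ near $t_q$, so $t_q$ is a tangential zero of $g_1$; by the two-zero bound $g_1$ then keeps the sign $g_1\ge0$ on all of $\R$, while $g_3$ must switch from $+$ to $-$ to $+$ across $t_p,t_q,t_r$, hence $g_3>0$ on an unbounded set on which also $g_1>0$, contradicting $g_1g_3\le0$. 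The remaining configurations (which of $f_1,f_3$ is touched at each of $t_p,t_q,t_r$) are treated by the same mechanism, and the at most two agreement points of $f_1$ and $f_3$ are absorbed into the same two-zero budget. I expect this bookkeeping — reconciling the alternation pattern of $f_2$ with the constraint $g_1g_3\le0$ while respecting the two-zero bound and the possibility of tangencies — to be the main obstacle; once it is organized, the contradiction closes and yields $\EI(k_2)<M$.
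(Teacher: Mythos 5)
Your reduction is sound up to the equioscillation step: from $\EI(k_2)=M$ you correctly conclude that the sample vector of $f_2$ has norm $M=\EI(k_2)$, so by Proposition~\ref{rrrx} it is the unique minimal element and its error alternates $+M,-M,+M$ at some $t_p<t_q<t_r$, pinning $f_2$ to $\max\{\FFk1,\FFk3\}$ at $t_p,t_r$ and to $\min\{\FFk1,\FFk3\}$ at $t_q$. The genuine gap is in the zero-counting that follows. First, the single configuration you actually work out is closed by an invalid step: once $t_q$ is a tangential zero of $g_1$, Corollary~\ref{coinciden3}(2) makes it the \emph{only} zero, so $g_1\ge 0$ on all of $\R$ with $g_1>0$ off $t_q$, and then $g_1g_3\le 0$ forces $g_3\le 0$ everywhere; hence $g_3$ cannot ``switch from $+$ to $-$ to $+$'' and there is no set, bounded or not, where $g_3>0$. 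The correct contradiction in this configuration is different: $g_3(t_p)\ge 0$ and $g_3(t_r)\ge 0$ collapse to $g_3(t_p)=g_3(t_r)=0$, so $g_3$ has two zeros which are local maxima, i.e.\ two tangencies, and item 2 of Corollary~\ref{coinciden3} forces $f_2\equiv\FFk3$, impossible since $k_2\neq k_3$. Second, you explicitly defer all remaining configurations (``once it is organized, the contradiction closes''), but that bookkeeping \emph{is} the proof of strictness. It can be completed: if $g_1(t_q)=g_3(t_q)=0$, neither function may vanish at both $t_p$ and $t_r$ (three zeros, item 3), so after relabelling $g_1(t_p)=0$ and $g_3(t_r)=0$; any tangency would now meet a second zero (item 2 again), so all four zeros are transversal, and tracking the sign flips gives $g_1>0$ and $g_3>0$ on $(-\infty,t_p)$, contradicting $g_1g_3\le 0$. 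But as submitted the argument neither carries this out nor offers a correct template to imitate, since the one exhibited mechanism fails.

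For comparison, the paper avoids the configuration analysis entirely. Assuming WLOG $\EI(k_3)\ge\EI(k_1)$, the moreover part of Lemma~\ref{l_enmedio} shows $|f_2(t_i)-T_i|$ can reach $\EI(k_3)$ only where $f_2$ fails to lie strictly between the two best approximations, and since $f_2$ and $\FFk3$ are distinct exponentials they agree at most twice (Corollary~\ref{coinciden3}); so the error of $f_2$ attains $\EI(k_3)$ at no more than two sample points, while equality $\EI(k_2)=\EI(k_3)$ would make $f_2$ a minimal element whose error attains its norm at three points by Proposition~\ref{rrrx} --- done. (A fair point in your favour: the paper's count silently ignores sample points where $f_2$ touches $\FFk1$ alone, which can carry the maximal error exactly when $\EI(k_1)=\EI(k_3)$; this is harmless for the $f_2$ constructed in Proposition~\ref{f1f2f3}, which touches $\FFk1$ only at crossings of $\FFk1$ and $\FFk3$, but your sign analysis, once completed, handles that borderline case without appealing to the construction.)
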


\begin{proof}[Proof of the corollary]
Let $f_2(t)=a_2\exp(k_2t)+b_2$ be such that 
$$
\inf\{\FFk1, \FFk3\}\leq f_2\leq\sup\{\FFk1, \FFk3\}
$$ 
and suppose $\EI(k_3)\geq\EI(k_1)$. As $f_2$ and $\FFk3$ are different exponentials, Corollary~\ref{coinciden3} ensures that they agree at most in two points. By Corollary~\ref{c_enmedio}, this implies that $|f_2(t_i)-T_i|\leq \EI(k_3)$ for every $i=1,\ldots,n$, with equality in at most two points. But Proposition~\ref{rrrx} ensures that $|\FFk2(t_i)-T_i|=\|(\FFk2(t_1)-T_1),\ldots,(\FFk2(t_n)-T_n)\|_\infty=\EI(k_2)$ at least at three points, so it is clear that $\EI(k_2)\neq\EI(k_3)$. As $\EI(k_2)\leq\|(f_2(t_1)-T_1,\ldots,f_2(t_n)-T_n)\|_\infty\leq\EI(k_3)$, we are done. 
\end{proof}

\begin{remark}
This is the other great advantage of using $\|\cdot\|_\infty$ instead of $\|\cdot\|_2$. When using the Euclidean norm, we have not been able to find conditions in $T$ ensuring that a function between two other functions will be a better approximation than any of them.
\end{remark}

\subsection{Proof of Proposition \ref{f1f2f3}}

We will split the proof of the proposition into some steps: 1.- Two cuts; 2.- One cut. Limiting case and 3.- One cut. Genuine case.

\subsubsection{Two cuts}\label{ss_2_puntos}

Let $c_1<c_2$ be such that $\FFk1(c_1)=\FFk3(c_1)$ and $\FFk1(c_2)=\FFk3(c_2)$. 

Consider $k_1<k_2<k_3<0$ and $f_2(t)=a_2\exp(k_2t)+b_2$ as is the beginning of the proof of Lemma~\ref{existenciaexp}, i.e., such that $f_2(c_1)=\FFk1(c_1), f_2(c_2)=\FFk1(c_2)$. 

As all three functions are different, Corollary~\ref{coinciden3} ensures that there are no more points where any couple of them agree and, moreover, that they are not tangent neither in $c_1$ nor in $c_2$. It is pretty clear that all three functions are strictly increasing or strictly decreasing, so we have only two possibilities: either $a_1, a_2, a_3\in(-\infty,0)$ or $a_1, a_2, a_3\in(0,\infty)$. It is readily seen that in the first case $t\to\infty$ implies $f_1(t)>f_2(t)>f_3(t)$ and in the second case we just need to let $t$ tend to $-\infty$ to have $f_1(t)<f_2(t)<f_3(t)$, so in both cases there are points where $\min\{f_1(t), f_3(t)\}< f_2(t)<\max\{f_1(t), f_3(t)\}$. 

As they are not tangent, $f_1-f_2$ and $f_3-f_2$ change their signs at $c_1$ and $c_2$, and this means that $\min\{f_1(t), f_3(t)\}< f_2(t)<\max\{f_1(t), f_3(t)\}$ for every $t\not\in\{c_1,c_2\}$.

\subsubsection*{One cut}
Let us assume $\FFk1 \equiv a_1 e^{k_1 t} + b_1$ and $\FFk3 \equiv a_3 e^{k_3 t} + b_3$ coincide just in $s_1 \in \R$.

With $a_1, a_3>0$, and for small enough $t$, $\FFk1(t) > \FFk3(t)$. The other cases are analogous. 
Since they coincide in a single point, $s_1$, $\FFk1(t) > \FFk3(t)$ $\forall \ t<s_1$, and then it 
must be one of this two possible situations:

\begin{equation}\label{ec_1_corte_solo_toca}
\FFk1(t) > \FFk3(t),  \quad t \in (s_1,\infty)
\end{equation}
or
\begin{equation}\label{ec_1_corte_la_de_verdad}
\FFk1(t) < \FFk3(t),  \quad t \in (s_1, \infty)
\end{equation}

\subsubsection{One cut. Limiting case}
This case refers to the situation described in \eqref{ec_1_corte_solo_toca}. We will show that this cannot happen by means of the following 

\begin{lemma}\label{mejoressecruzan}
Let $\FF_{k_1},\FF_{k_3}$ be the best approximations for $k_1<k_3<0$. Then, there exist $s_1, s_2\in\R$ such that $\FF_{k_1}(s_1)>\FF_{k_3}(s_1)$ and $\FF_{k_1}(s_2)<\FF_{k_3}(s_2)$.
\end{lemma}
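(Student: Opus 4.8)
The plan is to argue by contradiction, showing that if $\FFk1-\FFk3$ never changed sign then the two best approximations would be forced to coincide, which is impossible since $k_1\neq k_3$. Negating the conclusion leaves exactly two symmetric alternatives, so I would assume without loss of generality that $\FFk1(t)\geq\FFk3(t)$ for every $t\in\R$ and aim for a contradiction with Corollary~\ref{coinciden3}.

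The first real step is to exploit the equioscillation description of best approximations given by Proposition~\ref{rrrx}. Writing $e_1(t_i)=\FFk1(t_i)-T_i$, $e_3(t_i)=\FFk3(t_i)-T_i$, and $r_1=\EI(k_1)$, $r_3=\EI(k_3)$, the assumption $\FFk1\geq\FFk3$ gives $e_1(t_i)\geq e_3(t_i)$ at every node $t_i$. Proposition~\ref{rrrx} guarantees that $e_1$ attains both $+r_1$ and $-r_1$ among the nodes, and likewise $e_3$ attains $\pm r_3$. Evaluating the pointwise inequality at a node where $e_1=-r_1$ yields $r_1\leq r_3$, while evaluating it at a node where $e_3=+r_3$ yields $r_3\leq r_1$; hence $r_1=r_3=:r$. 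I would also record that $r>0$: otherwise both errors vanish at all $n\geq 3$ nodes, so $\FFk1$ and $\FFk3$ would interpolate $T$ at three points and the third item of Corollary~\ref{coinciden3} would make them equal.

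With $r_1=r_3=r$ the two squeezing inequalities become equalities, so $\FFk1$ and $\FFk3$ agree at a node $t_-$ where $e_1=-r$ and at a node $t_+$ where $e_3=+r$; since $r>0$ these nodes are distinct. Because $\FFk1-\FFk3\geq 0$ vanishes at $t_-$ and $t_+$, each of these is a global minimum of the difference, so $\FFk1$ and $\FFk3$ are tangent there. Thus the two functions agree at two distinct points and are tangent at one of them, and the second item of Corollary~\ref{coinciden3} forces $\FFk1=\FFk3$, the desired contradiction. The only delicate points — and where I expect the argument to need care — are verifying that the equioscillation really supplies nodes of both signs for each error and that the forced equalities genuinely produce two \emph{distinct} agreement/tangency nodes, since the whole contradiction rests on feeding exactly the configuration forbidden by Corollary~\ref{coinciden3}.
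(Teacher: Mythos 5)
Your proof is correct and follows essentially the same route as the paper: argue by contradiction assuming $\FFk1\geq\FFk3$ everywhere, exploit the equioscillation of Proposition~\ref{rrrx} at nodes where the errors equal $-\EI(k_1)$ and $+\EI(k_3)$, and invoke the rigidity of Corollary~\ref{coinciden3} through the observation that a nonnegative difference vanishing at a point must be tangent there. The only difference is the order of the deductions: the paper first shows the two functions can agree at most once and uses this to make its squeeze strict, reaching the contradiction $\EI(k_3)\geq\EI(k_1)\geq\EI(k_3)$ with one inequality strict, whereas you first deduce $\EI(k_1)=\EI(k_3)$ and then produce two distinct tangential agreement points to feed into Corollary~\ref{coinciden3} --- a reorganization that also handles explicitly the degenerate case $\EI(k_1)=\EI(k_3)=0$, which the paper passes over in silence.
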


\begin{proof}
Suppose on the contrary that $\FF_{k_1}(t)\geq \FF_{k_3}(t)$ for every real $t$. Then, both functions can agree at most at just one point. Indeed, it they agree at two points then they have to be tangent at both, and this implies, by Corollary~\ref{coinciden3}, that they are the same function. By Claim~\ref{rrrx}, there exist $i, j\in\{1,\ldots,n\}$ such that $T_{i}=\FF_{k_1}(t_i)+\EI(k_1)$ and $T_{j}=\FF_{k_3}(t_j)-\EI(k_3)$. So, as $\FF_{k_1}(t)\geq \FF_{k_3}(t)$, we have $T_i-\FF_{k_3}(t_i)\geq \EI(k_1)$ and $T_j-\FF_{k_1}(t_j)\leq -\EI(k_3)$ and at least one of the inequalities is strict because $\FF_{k_1}(t)= \FF_{k_3}(t)$ for at most one $t$. Of course, $\EI(k_3)\geq T_i-\FF_{k_3}(t_i)$ and $-\EI(k_1)\leq T_j-\FF_{k_1}(t_j)$. Mixing all the inequalities, we obtain $\EI(k_3)\geq \EI(k_1), -\EI(k_1)\leq -\EI(k_3)$. Since at least one of the inequalities is strict, we have a contradiction that finishes the proof.
\end{proof}

\subsubsection{One cut. Genuine case}
Suppose we are in the situation described by (\ref{ec_1_corte_la_de_verdad}). Assume $a_1, a_3>0$, the other cases are analogous. With this assumption, the last inequality ensures $b_1 \leq b_3$. We are going to find $a_2 \in \R^+$ and $b_2 \in \R$ such that the function $a_2 e^{k_2 t} + b_2$ agrees with $\FFk1$ and $\FFk3$ in $s_1$ and remains between them. 

Considering $b_2=(b_1+b_3)/2$, $a_2$ must satisfy 
\[
a_2 e^{k_2s_1} + b_2 = a_3 e^{k_3s_1} + b_3, 
\]
and this implies that for small enough $t$, 
\[
a_1 e^{k_1 t} + b_1 > a_2 e^{k_2 t} + b_2> a_3 e^{k_3 t} + b_3.
\] 
Now, we have to guarantee $a_1e^{k_1t}+b_1<a_2e^{k_2t}+b_2<a_3e^{k_3t}+b_3$ for big enough $t$, no matter whether $b_1=b_3$ or $b_1<b_3$. Please observe that this would be enough for ending the proof. For $b_1<b_3$, the inequality is obvious, so we suppose $b_1=b_3$. Thus, $b_1$, $b_2$ and $b_3$ agree and, therefore,
\[
a_1 e^{k_1 t} + b_1 < a_2 e^{k_2 t} + b_2
\Leftrightarrow \;  a_1 e^{(k_1-k_2) t} < a_2
\]
and
\[
a_2 e^{k_2 t} + b_2 < a_3 e^{k_3 t} + b_3 
\Leftrightarrow \; a_2 e^{(k_2-k_3) t} < a_3.
\]
Both couples of equivalent inequalities are true for big enough $t$ since $a_1$, $a_2$ and $a_3$ are strictly positive and
\begin{equation*}
 \lim_{t \to \infty} e^{(k_1-k_2) t} = \lim_{t \to \infty} e^{(k_2-k_3) t} = 0.
\end{equation*}

So, we have finished the proof of Proposition~\ref{f1f2f3} since Lemma \ref{mejoressecruzan} rules out the no-cut situation. 

\subsection{Main Result}

It is time to state properly what we have:

\begin{theorem}
Let us consider $T=(T_1,\ldots,T_n)\in\R^n, t_1<\cdots<t_n\in\R$. Then, $\EI$ is a quasiconvex function. Moreover, if $T$ does not have a maximum between two minima nor a minimum between two maxima, then $\EI$ is strictly quasiconvex. 
\end{theorem}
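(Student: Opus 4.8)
The plan is to reduce the statement to the work already done in this subsection by means of a clean dichotomy on $T$, and then to invoke Corollary~\ref{csquasiconvex}. Since $\R^-$ is convex and Definition~\ref{d_quasiconvex} concerns convex combinations, proving that $\EI$ is (strictly) quasiconvex amounts to checking that, for every triple $k_1<k_2<k_3<0$,
\[
\EI(k_2)\leq\max\{\EI(k_1),\EI(k_3)\}
\]
(with strict inequality in the strict case), because any $k_2$ strictly between $k_1$ and $k_3$ is of the form $\lambda k_1+(1-\lambda)k_3$ with $\lambda\in(0,1)$. So first I would fix such a triple and split the argument according to whether or not a constant is a best approximation to $T$.

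In the degenerate case, suppose some constant is a best approximation to $T$ in some $G_k$. By Remark~\ref{sinconstantes} this forces $T$ to have a minimum between two maxima or a maximum between two minima and, crucially, it then makes a constant a best approximation in \emph{every} $G_k$. Since constants lie in every $G_k$, the optimal constant must be the Chebyshev value $(\max_i T_i+\min_i T_i)/2$, and the uniqueness part of Proposition~\ref{rrrx} pins down $\F_k$ to be exactly this constant for all $k$. Consequently $\EI(k)=(\max_i T_i-\min_i T_i)/2$ is independent of $k$; a constant function is trivially quasiconvex, so the required inequality holds (with equality). Note that this is precisely the configuration excluded by the ``Moreover'' hypothesis, and exactly the case in which strict quasiconvexity fails, which is reassuring.

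In the generic case — that is, when $T$ has neither a maximum between two minima nor a minimum between two maxima — the contrapositive of the implication recalled above (by Remark~\ref{remark_constante}, only when such a configuration is present can a constant be optimal) shows that no constant is a best approximation, so the standing hypothesis of the subsection is in force and Proposition~\ref{f1f2f3} applies verbatim to the chosen triple $k_1<k_2<k_3<0$. Corollary~\ref{csquasiconvex} then delivers the strict inequality $\EI(k_2)<\max\{\EI(k_1),\EI(k_3)\}$, i.e.\ strict quasiconvexity, which in particular gives quasiconvexity. Collecting both cases proves that $\EI$ is quasiconvex for every $T$, while the generic case alone proves the ``Moreover'' assertion.

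Since the analytical heart of the matter — the existence of an intermediate exponential (Proposition~\ref{f1f2f3}) and the strict comparison of errors (Corollary~\ref{csquasiconvex}) — is already established, I do not expect new computational difficulties here. The only delicate point, and the one I would treat most carefully, is the bookkeeping of the dichotomy: I must make sure that the geometric condition in the theorem (no maximum between two minima, no minimum between two maxima) coincides \emph{exactly} with the non-degenerate regime, relying on the somewhat informal Remarks~\ref{remark_constante} and~\ref{sinconstantes}. In particular I would verify that ``a constant is optimal for one $k$'' and ``a constant is optimal for all $k$'' are equivalent, so that the two cases are genuinely complementary and exhaustive, leaving no value of $T$ uncovered.
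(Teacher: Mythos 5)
Your proposal is correct and follows essentially the same route as the paper: the authors also split into the degenerate case (where, by Remark~\ref{remark_constante}, the best approximation is the same constant for every $k$, so $\EI$ is constant and trivially quasiconvex) and the generic case, where strict quasiconvexity is exactly Corollary~\ref{csquasiconvex}. Your additional bookkeeping — the reduction to triples $k_1<k_2<k_3<0$ and the identification of the optimal constant as $(\max_i T_i+\min_i T_i)/2$ via Proposition~\ref{rrrx} — merely makes explicit what the paper leaves implicit in its remarks.
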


Please recall that, as defined in the first part of this paper (see the begining of this section and the Introduction), $\EI:(-\infty,0)\to[0,\infty)$ is the function mapping $k$ to $\|\F_k-T\|_\infty$, where $\F_k$ is the best approximation to $T$ in the plane $G_k=\langle (1,\ldots,1),(\exp(kt_1),\ldots,\exp(kt_n)) \rangle$.

\begin{proof} 
 Actually, the proof of this result is this section. As it was indicated in Remark \ref{remark_constante}, when $T$ has two indices where the maximum (or the minimum) is attained, and some minimum (resp. maximum) is between two maxima (resp. minima) then the best approximation for each $k$ will be a constant, and {\em always} the same constant. Therefore, in this case, $\EI$ will be a constant function, in particular a quasiconvex function. In any other case, note that strict quasiconvexity of $\EI$ is an inmediate consequence of Corollary \ref{csquasiconvex}. Therefore, the result is proved.
\end{proof}

Once we have seen that $\EI$ is quasiconvex, we must analyse the consequences of this.
Please recall that we are assuming that $T$ is not weird, so $\EI$ is strictly quasiconvex.
As seen in~\cite[p.~128]{crouzeix}, there are three options for $\EI$. Namely: 

\noindent 1.- It is decreasing; 2.- It is increasing and 3.- There is $k_0\in(-\infty,0)$ such that $\EI$ decreases on $(-\infty,k_0)$ and increases on $(k_0,0)$.

Of course, if the third case holds then $k_0$ is the point where the minimum is attained and this means that there is a best approximation $\lambda_1\exp(k_0 t)+\lambda_2$. Let us see, in a somehow loose way, the meaning of the other two cases.

Case 1: If $\EI$ is decreasing, then the approximations $\F_k=a_k\exp(kt)+b_k$ are better as we let $k$ tend to 0. As the slopes of all $\F_k$ must be bounded in the interval $[t_1,t_n]$, $a_kk$ must be also bounded, so the second derivative of $\F_k$ tends to 0 uniformly in $[t_1,t_n]$. This means that the limit of $\F_k$ is a line $a_0t+b_0$, that must be a better approximation than any exponential. Then, the point $T$ we are trying to approximate must have a pretty strange form: there must exist $i_1<i_2<i_3<i_4$ such that
$$T_{i_1}-(a_0t_{i_1}+b_0)=a_0t_{i_2}+b_0-T_{i_2}=T_{i_3}-(a_0t_{i_3}+b_0)=a_0t_{i_4}+b_0-T_{i_4}=\pm r,$$
where $r$ is, as usual, $\|T_1-(a_0t_{1}+b_0),\ldots,T_n-(a_0t_{n}+b_0)\|_\infty$. It is pretty clear that, for $T, a_0$ and $b_0$ fulfilling the above, there is no convex or concave function that approximates $T$ better than $a_0t+b_0$. On the other hand, if there are just three indices that fulfill the above equality, then there will be an exponential closer to $T$ than $a_0t+b_0$, as long as $a_0\neq 0$.

Case 2: If $\EI$ is increasing, then the best approximation would, say, lie at $k=-\infty$.
This is what happens if $T_1=\max\{T_1,\ldots,T_n\}$ and $T_2=\min\{T_1,\ldots,T_n\}$. In this case, with $T_i=\max\{T_2,\ldots,T_n\}$, the vector 
$$
(T_1,1/2(T_2+T_i),1/2(T_2+T_i),\ldots,1/2(T_2+T_i))
$$
is closer to $T$ than any possible approximation, and it is the limit, as $k\to-\infty$, of the evaluations in $t_1,\ldots,t_n$ of some exponentials. Namely, suppose that
$t_1=0$. Then, the limit of $(T_1-1/2(T_2+T_i))\exp(kt_j)+1/2(T_2+T_i)$, as $k$ goes to $-\infty$, is $T_1$ if $j=1$ and $1/2(T_2+T_i)$ for every $j>1$.

This kind of limit will be a best approximation when $T_1=\max\{T_1,\ldots,T_n\}$ and also there exist some $i<j$ such that both $T_i=\min\{T_1,\ldots,T_n\}$ and $T_j=\max\{T_2,\ldots,T_n\}$ are fulfilled.

Now, we are in conditions, finally, to put in order everything we know about how the behaviour of $\EI$ depends on $T$. Let $M=\max\{T_1,\ldots,T_n\}$ and $m=\min\{T_1,\ldots,T_n\}$:

\begin{enumerate}
\item If $T$ has a minimum between two maxima (or a maximum between two minima), then the best approximation for every $k$ is a constant. Namely, it is $1/2(m+M)$.
\item If $T_1=M$ and $M_2=\max\{T_2,\ldots,T_n\}<M$ is attained after some minimum, then the best approximation does not exist, but the best approximations tend to
$(M,1/2(M_2+m),1/2(M_2+m),\ldots,1/2(M_2+m))$ as $k\to-\infty$.
\item If there are some $a_0, b_0\in\R, i_1<i_2<i_3<i_4\in\{1,\ldots,n\}$ such that
$$
T_{i_1}-(a_0t_{i_1}+b_0)=a_0t_{i_2}+b_0-T_{i_2}=T_{i_3}-(a_0t_{i_3}+b_0)=a_0t_{i_4}+b_0-T_{i_4}=\pm r,
$$
where $r$ is $\|T_1-(a_0t_{1}+b_0),\ldots,T_n-(a_0t_{n}+b_0)\|_\infty$, then the best approximation does not exist, but the best approximations improve as $k\to 0$ and they approach $(a_0t_1+b_0,\ldots,a_0t_n+b_0)$.
\item In any other case, there exists $k$ such that $\F_k$ is the (unique) best possible approximation.
\end{enumerate}

\section{TAC's Flowchart} \label{s_TAC}
Once the quasiconvex character of function $\EI$ has been proven, we can propose an algorithm to estimate the minimum of this function by sampling it. The convergence of this kind of algorithm is guaranteed for any non monotonic quasiconvex function. However, we wish to extend the use of this algorithm to {\it quasiconvexish} functions, that is, functions that, not being quasiconvex, behave as follows: the absolute minimum is considerably lower than the relative ones and, also, the fall into and rise from the absolute minimum take long enough. This behaviour allows us to detect the interval where the absolute minimum lies simply by sampling the function within reason. In this sense, we will say that the absolute minimum must be deep and wide enough. Think about a doodle like the one in Figure \ref{fig:sawtooth}.
\begin{figure}[ht]
  \centering
  \includegraphics[width=0.3\linewidth]{./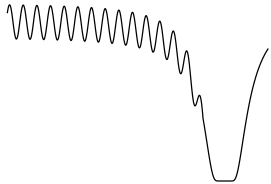}
\caption{An idea of {\it quasiconvexish}}
\label{fig:sawtooth}
\end{figure}
Even though it could never be the graph of a quasiconvex function, the absolute minimum is deep and wide enough. This situation can be observed, for example, while fitting data stemming from trigonometric functions.

It is obvious that we can not ensure the convergence of the algorithm in every possible situation. Actually, it could be quite easy to find functions where the algorithm does not work properly, due to the impossibility to guarantee that a reasonable a priori sample will include points in the interval where the function falls into and raises from the absolute minimun. That's why the strict quasiconvex character is imperative to ensure the algorithm's convergence. Nevertheless, the algorithm we propose is designed to find, or more precisely to bound, the absolute minimum of {\it quasiconvexish} functions. The algorithm proposed here is shown in  Figure \ref{tacflow}.

In case we decide to use this algorithm when the quasiconvex character of $\mathcal{E}$ is not guaranteed, as we do in Section \ref{s_aplications_biexp}, it might be advisable to add some safeguards to ensure that one finds the absolute minimum, avoiding getting stuck in a relative one. That's why block \textbf{($\blacklozenge$)} was added to the algorithm in order to densify the sample (see Figure \ref{tacflow}). This is necessary to select an interval including the absolute minimum when relative minima are detected. 

An additional condition, condition \textbf{C2} or analogous (see Figure \ref{tacflow}), must be added to the algorithm in order to improve its robustness, being able now to deal with functions which have a whole interval of absolute minima. 

This two conditions, of course, can be ignored in case we just want to fit an exponential decay as described in previous section.

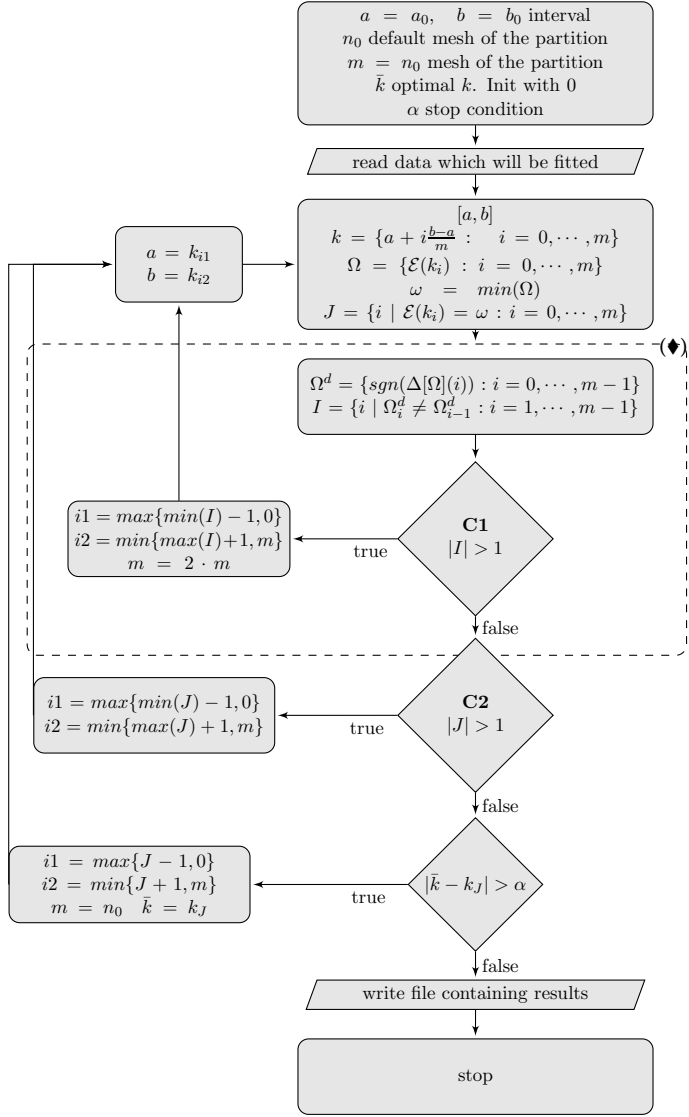
\begin{figure}[ht]
\begin{center}
\begin{tikzpicture}[node distance = 3cm, auto]
\begin{scope}[scale=0.65, transform shape]
   \node [block] (init) {$a=a_0, \quad b=b_0$ interval \\
    					  $n_0$ default mesh of the partition \\
    					  $m=n_0$ mesh of the partition\\
    					  $\bar{k}$ optimal $k$. Init with $0$\\
    					  $\alpha$ stop condition
    					 };
   \node [io, below of=init,node distance=2cm] (data) {read data which will be fitted};
   \node [block, below of=data,node distance=2.1cm] (step1) 
    	   {$[a,b]$ \\
    	    $k = \{ a + i\frac{b-a}{m} : \quad i = 0, \cdots, m \} $ \\ [0.1cm]
    	    $\Omega = \{\mathcal{E}(k_i): i=0,\cdots,m\}$ \\
    	    $\omega = min(\Omega)$\\
    	    $J = \{i \; | \; \mathcal{E}(k_i)= \omega : i=0,\cdots,m\}$ \\
    	   };
   \node [block, below of=step1,node distance=2.7cm] (step2) 
    	   {$ \Omega^d = \{sgn(\Delta[\Omega](i)): i=0,\cdots,m-1 \}$ \\
    	    $I = \{i \; | \; \Omega^d_i \neq \Omega^d_{i-1} : i=1,\cdots,m-1\}$ \\
    	   };
    \node [block, left of=step1,node distance=6cm, text width=6em] (feedback) 
    			{
    			 $a=k_{i1}$\\
    			 $b=k_{i2}$
    			};
    \node [if, below of=step2, node distance=2.9cm] (decide1) {{\bf C1}\\
                                                              $ |I|>1$
                                                             };
    \node [block, left of=decide1, node distance=6cm, text width=11em] (update1)
            {$i1 = max\{min(I)-1, 0\}$ \\
             $i2 = min\{max(I)+1, m\}$ \\
             $m=2 \cdot m$
            };    
    \node [if, below of=decide1, node distance=3.6cm] (decide2) {{\bf C2}\\
                                                                 $ |J|>1$
                                                                };
    \node [block, left of=decide2, node distance=6.5cm, text width=12em] (update2)    
            {$i1 = max\{min(J)-1, 0\}$ \\
             $i2 = min\{max(J)+ 1, m\}$
            };    
    \node [if, below of=decide2, node distance=3.45cm] (decide3) {$ |\bar{k}-k_{J}|>\alpha$};
    \node [block, left of=decide3, node distance=7cm, text width=12em] (update3)    
            {$i1 = max\{J-1, 0\}$ \\
             $i2 = min\{J+1, m\}$ \\
             $m=n_0 \quad \bar{k}=k_J$ 
            };    
    \node [io, below of=decide3, node distance=2.25cm] (data2) {write file containing results};
    \node[block, below of=data2, node distance=1.67cm] (stop) {stop};
     \begin{scope}[on background layer]
		\node[group,above of=decide2,yshift=1.4cm, xshift=-2.4cm](emp){};
	\end{scope}
	\node[note, right of=step2, node distance=4cm, yshift=1cm](nota){\textbf{($\blacklozenge$)}}; 

    \path [line] (init) -- (data);
    \path [line] (data) -- (step1);
    \path [line] (step1.south) -- (step1|-emp.north);
    \path [line] (step2) -- (decide1);
    \path [line] (decide1) -- node [near start] {true} (update1.east);
    \path [line] (update1) -- (feedback.south);
    \path [line] (decide1) -- node {false}(decide2);
    \path [line] (decide2.west) -- node [near start] {true} (update2.east);
    \path [line] (decide2) -- node {false}(decide3);
    \path [line] (decide3) -- node [near start] {true} (update3.east);
    \path [line] (decide3) -- node {false}(data2);
    \path [line] (update2.west) |- (feedback.west);
    \path [line] (update3.west) |- (feedback.west);
    \path [line] (feedback) -- (step1);
    \path [line] (data2) -- (stop);
\end{scope}
\end{tikzpicture}
\caption{TAC's Flowchart.} 
\label{tacflow}
\end{center}

\end{figure}

\subsection{The obviousness}
In this subsection we are going to verify that TAC accomplish the most elemental task. In other words, sampling a function
\begin{equation*}
f(t)= \lambda_1 e^{kt}+ \lambda_2,
\end{equation*}
TAC must find $f$.

In this case the Euclidean norm will be used instead of the max norm; although the convergence of the algorithm is not ensured. The reason is that the calculations are much simpler due to the possibility of using equations \eqref{e_expresiones_lambdas} to obtain the value of $\lambda_1$ and $\lambda_2$ for each $k$. This change will remain in following examples, stretching in that way the use of the algorithm beyond its proven convergence conditions.

Let us choose 
\begin{equation*}
f(t)=6.87654321 ~ e^{-1.12345678 t} + 2.11223344.
\end{equation*}
The function $f$ will be sampled in $t\in\{1,\dots,1000\}$. The constant $k$ is to be seek in the interval $[-10,-10^{-9}]$   and the stop condition for TAC is set as $\alpha=10^{-9}$. 

The results corresponding to this implementation of TAC are shown in Table \ref{tbfit0}.

\begin{table}[ht]
\begin{center}
\resizebox{\textwidth}{!}{
\begin{tabular}{ccccccc}
\toprule 
Divisions & CPU Time & $k$ & $\lambda_1$ & $\lambda_2$ & RSS & MSE  \\
& (in seconds) & & & & \\
\midrule
$10$ & $0.02659$ & $-1.12345678027$ & $6.87654321210$ & $2.11223344000$ & $3.00986e-39$ & $3.00986e-42$ \\
\bottomrule 
\end{tabular}
}
\caption{Result of implementation of TAC. $RSS=\sum_{i=1}^n (T_i-\F_k(t_i))^2$, being $n=1000$ the number of observations and $MSE=RSS/n$.} 
\label{tbfit0}
\end{center}
\end{table}

It is quite obvious that TAC finds $f$ according to stop condition imposed on $k$. The calculations in this section, along with the ones in the two following sections were carried out by means of a GNU Octave using an Intel Core i7-2600 3.4GHz Quad-Core processor with 4GB of RAM. The system used is an elementary OS 0.4.1 Loki (64-bit) based on a Ubuntu 16.04.3 LTS with a Linux kernel 4.4.0-93-generic.

\section{Fitting exponential decay in a Newton's law of cooling process} \label{s_aj_cooling}
The aim of this section is to show the implementation of TAC in a well known process. Consider a device submerging in the ocean to determine the immediate water temperature at some point. During the manoeuvre the sensor is recording the temperature as programmed. Since, usually, initial temperatures of the device and the water are different, the data show how thermometer and water achieve thermal balance. This behaviour is usually referred to as Newton's law of cooling. According to this law, the rate at which a body cools is proportional to the difference between the temperature of the body and the temperature of the surrounding medium; see, for example, \cite[p.~21]{dzill1996}. In other words, the time evolution of body's temperature is a solution of an ordinary differential equation, a homogeneous linear one; and therefore it must be an exponential function as follows
\begin{equation} \label{eqcooling}
P(t)=\lambda_1 e^{kt}+ \lambda_2,
\end{equation}
where $\lambda_1$ and $\lambda_2 \in \R$ and, in this case, $k<0$.

The data considered in this section were obtained during the Spanish Antarctic campaign in the Antarctic summer 2012. We implement the algorithm in order to fit, by a pattern as (\ref{eqcooling}), the records obtained by the device. This is the way we propose to estimate the water's temperature at the instant which the device was introduced in. In this application, the estimation of water's temperature would be $\lambda_2$. The results corresponding to this implementation of TAC are gathered in Table \ref{tbfit1}. A wide interval, $[-10,-10^{-9}]$, containing $k$ will be considered. The stop condition of TAC will be fixed at $\alpha=10^{-9}$.

\begin{table}[ht]
\begin{center}
\resizebox{\textwidth}{!}{
\begin{tabular}{ccccccc}
\toprule 
Divisions & CPU Time & $k$ & $\lambda_1$ & $\lambda_2$ & RSS & MSE  \\
& (in seconds) & & & & \\
\midrule
$10$ & $0.0718$ & $-0.0027323255878$ & $5.839341497$ & $-1.3650697701$ & $7.63101355666$ & $0.00139992910597$ \\
$20$ & $0.1137$ & $-0.0027323254186$ & $5.839341344$ & $-1.3650697841$ & $7.63101355667$ & $0.00139992910597$ \\
$30$ & $0.1191$ & $-0.0027323254987$ & $5.839341416$ & $-1.3650697775$ & $7.63101355666$ & $0.00139992910597$ \\ 
$40$ & $0.1398$ & $-0.0027323255060$ & $5.839341423$ & $-1.3650697769$ & $7.63101355666$ & $0.00139992910597$ \\ 
$50$ & $0.1415$ & $-0.0027323256495$ & $5.839341553$ & $-1.3650697650$ & $7.63101355668$ & $0.00139992910597$ \\
\bottomrule 
\end{tabular}
}
\caption{Result of implementation of TAC. $RSS=\sum_{i=1}^n (T_i-\F_k(t_i))^2$, being $n=5451$ the number of observations and $MSE=RSS/n$.} 
\label{tbfit1}
\end{center}
\end{table}

In the numerical analisys bibliography, relative error is defined with or without sign; in this paper we will consider the latter.

\begin{figure}[ht]
  \centering
  \begin{subfigure}[t]{.5\textwidth}
    \centering
    \includegraphics[width=\linewidth]{./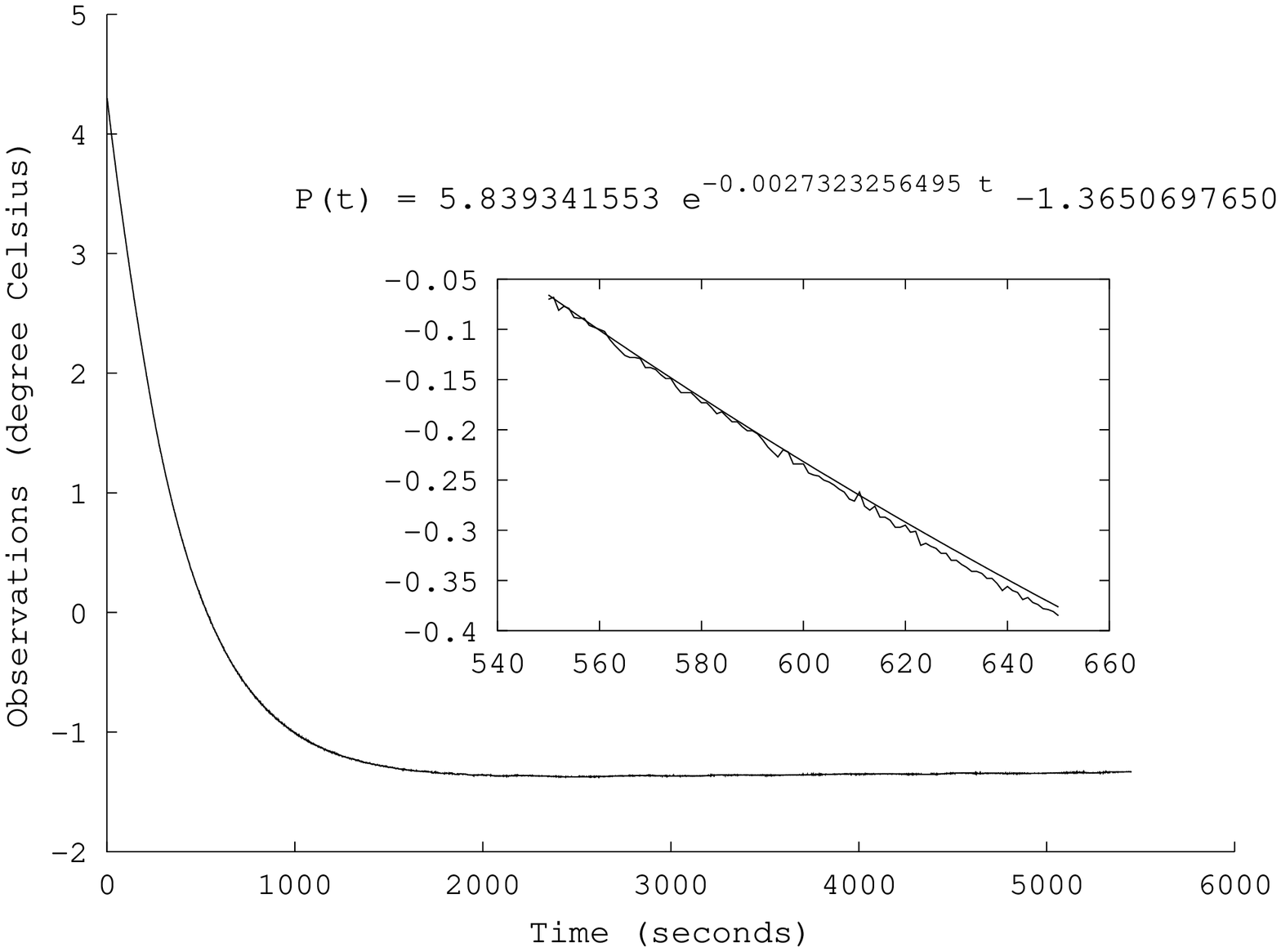}
    \caption{Data and a small detail of TAC's fit.}
  \end{subfigure}\hfill
  \begin{subfigure}[t]{.5\textwidth}
    \centering
    \includegraphics[width=\linewidth]{./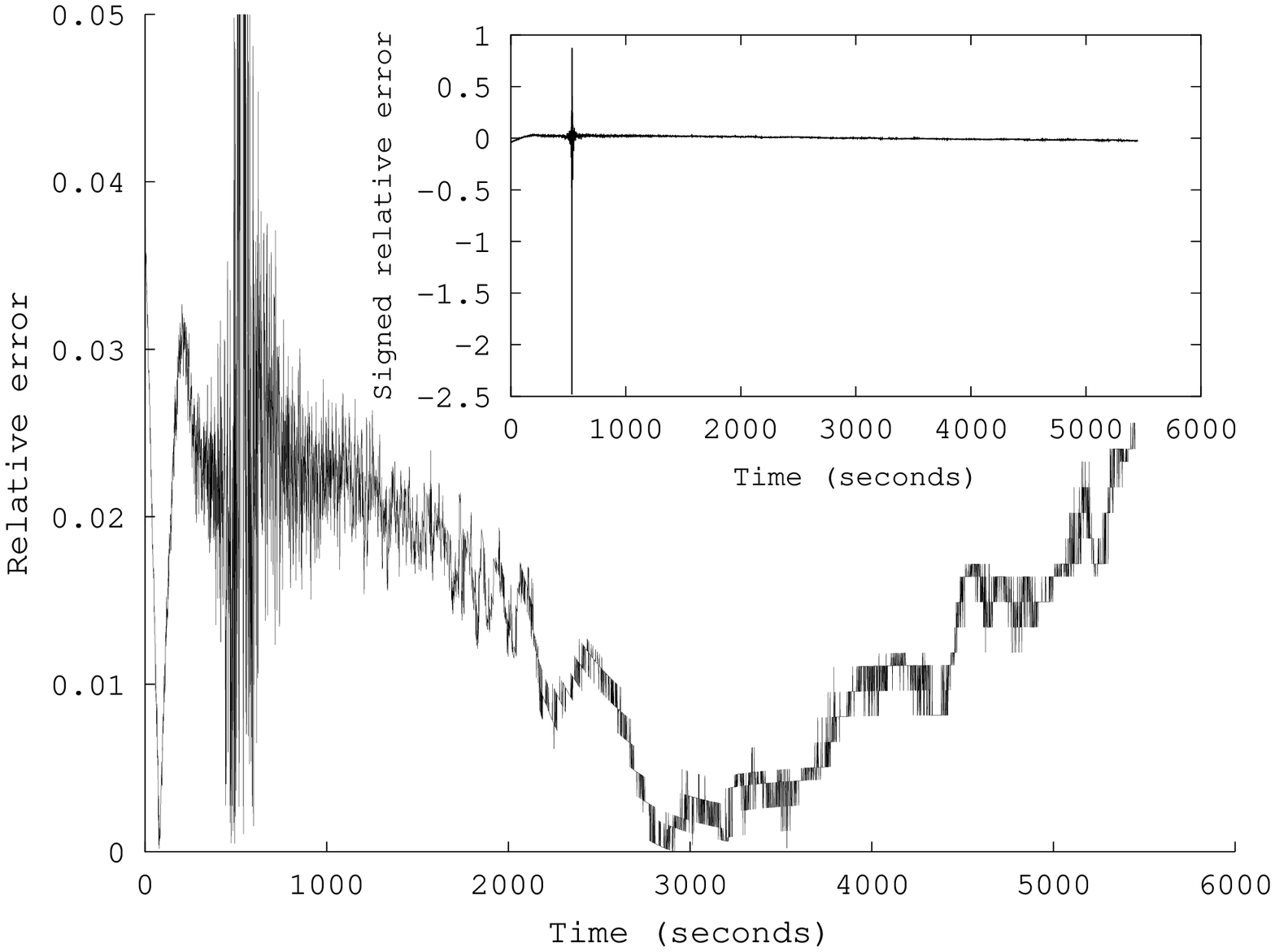}
    \caption{Relative error for values lower or equal than $0.05$. On a small scale, the signed relative error.}
      \label{subfig:errors_cooling}
  \end{subfigure}%
  \begin{subfigure}[t]{.5\textwidth}
    \centering
    \includegraphics[width=\linewidth]{./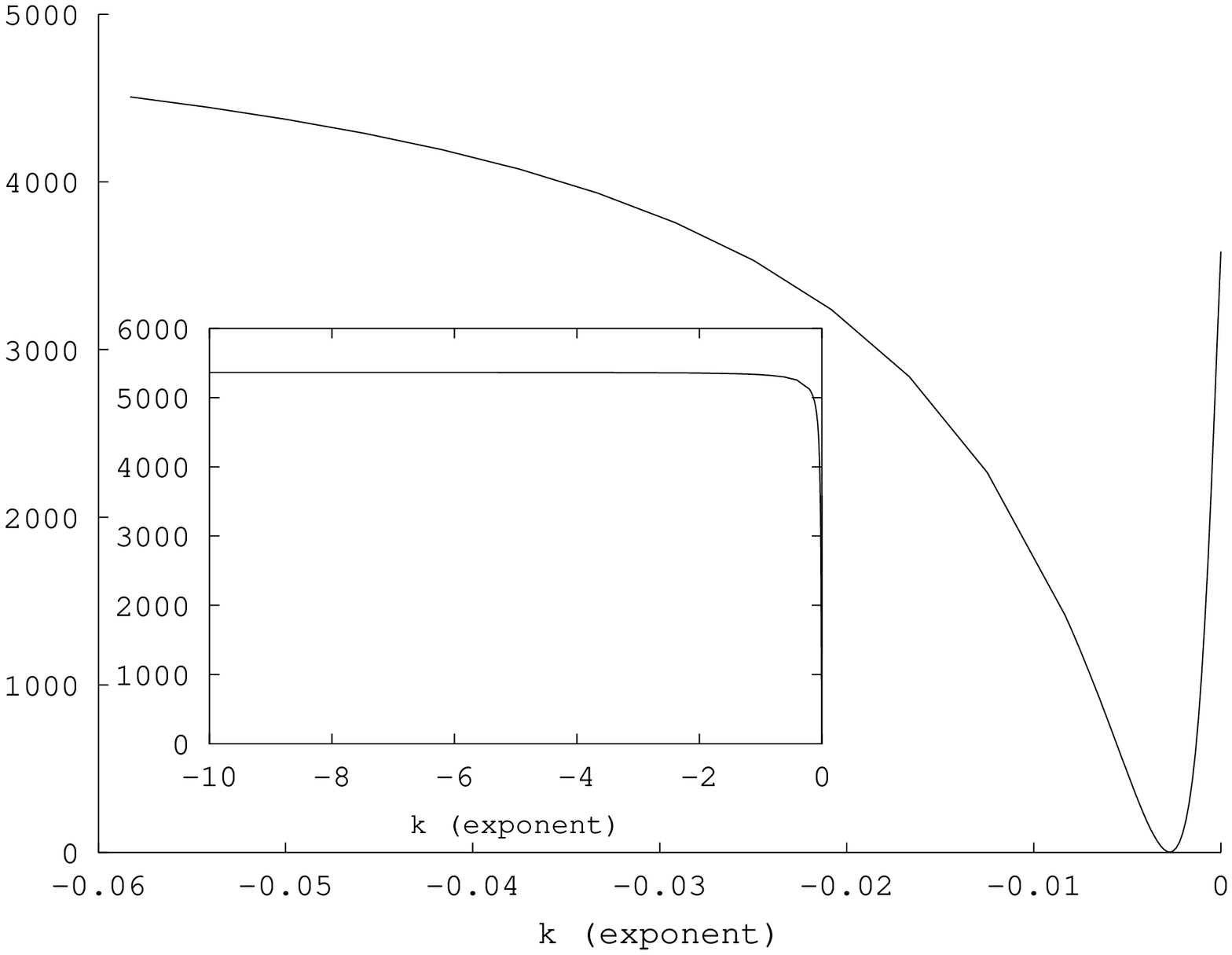}
    \caption{Quasiconvex shape of $\E2$.}
  \end{subfigure}%
  \caption{Some graphical aspects about this TAC implementation.}
\end{figure}

A spike can be seen in the small window of Figure \ref{subfig:errors_cooling}. This spike should not be considered as an indicator of a poor adjustment of the curve to the data. On the contrary: the spike is due to the proximity of the data to zero and, however, the error remains bounded. This is because curve and data are close enough to control the fact that we are virtually dividing by zero.

\section{Fitting biexponential decays in stress relaxation on living cells} \label{s_aplications_biexp}

Stress relaxation experiments are widely spread in the study of the viscoelastic mechanical properties of soft matter, such as cells, polymer brushes, vesicles\ldots~Indentation experiments are carried out by scanning probe microscopes, e.g., Atomic Force Microscopes. The tip of that microscope is placed in contact with the sample at a constant height and, if the sample shows a viscoelastic behaviour, an exponential decay of the needed force to keep the tip in the same place is observed, provided the contact area remains constant.

In some studies, see \cite[p.~3]{beniteztoca2010}, when the sample is formed by two materials with different (visco-)elastic properties, the force decay can be modelled by a double exponential decay in the form
\begin{equation}\label{ec_dos_exponenciales}
P(t)=\lambda_1 e^{k_1 t} + \lambda_2 e^{k_2 t} + \lambda_3, 
\end{equation}
being real all the parameters and $k_1, k_2 <0$.

We will show two different implementations. The first one, included for demonstration purposes only, is constituted by Table \ref{tbfit_14_curvas}. This table shows the behaviour of TAC fitting curves corresponding to the first 5 of the 14 examples that can be found in \cite{benitezbolos2017}. A huge interval, $[-20,-10^{-6}]$, containing $k_1$ and $k_2$ will be considered; $10$ will be chosen as the default mesh partition and the stop condition in this case will be fixed at $\alpha=10^{-6}$. The point of this table is to allow the reader to compare TAC against a widely known and used algorithm: the Levenberg-Marquardt algorithm, taking in mind that we do not need to initialize any variable whatsoever.

\begin{table}[ht]
\begin{center}
\resizebox{\textwidth}{!}{
\begin{tabular}{ccccccc}
\toprule
$k_1$ & $k_2$ & $\lambda_1$ & $\lambda_2$ & $\lambda_3$ & SSR & MSE \\
\midrule
$-4.7832922$ & $-0.3521602$ & $1.6849216e-10$ & $1.3747312e-10$ & $6.5258959e-10$ & $7.0319408e-20$ & $3.4352422e-23$ \\
$-2.2501220$ & $-0.0154859$ & $1.7351215e-10$ & $6.0038892e-10$ & $1.7367841e-10$ & $1.0648907e-19$ & $5.2022018e-23$ \\
$-4.1127733$ & $-0.1999089$ & $1.5888120e-10$ & $1.2906191e-10$ & $6.7388509e-10$ & $7.8436193e-20$ & $3.8317632e-23$ \\
$-6.6396453$ & $-0.3583037$ & $1.5563740e-10$ & $1.4391666e-10$ & $6.5265553e-10$ & $7.6892152e-20$ & $3.7563337e-23$ \\
$-4.9698671$ & $-0.3825276$ & $1.6264867e-10$ & $1.2442707e-10$ & $6.5888391e-10$ & $7.6859116e-20$ & $3.7547199e-23$ \\ \bottomrule
\end{tabular}
}
\vspace*{-0.2cm}
\caption{Parameters gathered in this table correspond to the biexponential decay of pattern (\ref{ec_dos_exponenciales}). Time of CPU processing is between 1.96 and 1.98 seconds for each curve.} \vspace{0.35cm}
\label{tbfit_14_curvas}
\end{center}
\end{table} 

We will go further in a second implementation, fitting data obtained in an experiment developed at the Institute for Biophysics, of the University of Natural Resources and Life Sciences (BOKU-Wien). These records present, not only a biexponential decay, but also some periodic signals which are probably due to the oscillations of the microscope's cantilever as a consequence of electronic noise. The pattern considered in this case is
\begin{equation}\label{ec_oscilante}
\begin{split}
P(t) & = \lambda_{1} e^{k_1 t} + \lambda_{2} e^{k_2 t} + \lambda_3 \\
& \quad + \beta_1 \sin(\mu_1 t)+ \beta_2 \cos(\mu_1 t)+ \beta_3 \sin(\mu_2 t)+ \beta_4 \cos(\mu_2 t),
\end{split}
\end{equation}
being real every parameter, $k_1,k_2<0$ and $\mu_1,\mu_2>0$.

We have chosen $[-20,-10^{-6}]$ as the interval where $k_1$ and $k_2$ can be found, $[10^{-6},10]$ as the interval where $\mu_1$ and $\mu_2$ can be found and fixed $\alpha=10^{-6}$ as the algorithm's stop condition.

Note that, as in previous section, we are going beyond of our knowledge about the convergence of this algorithm in order to obtain evidences about its robustness. In this case the function $\E2$ exhibits a deep and wide enough absolute minimun allowing the algorithm to work properly; even though the quasiconvexity of $\E2$ is not guaranteed. These characteristics of this minimum are the key of its robustness.

\begin{figure}[ht]
  \centering
  \begin{subfigure}{.5\textwidth}
    \centering
    \includegraphics[width=\linewidth]{./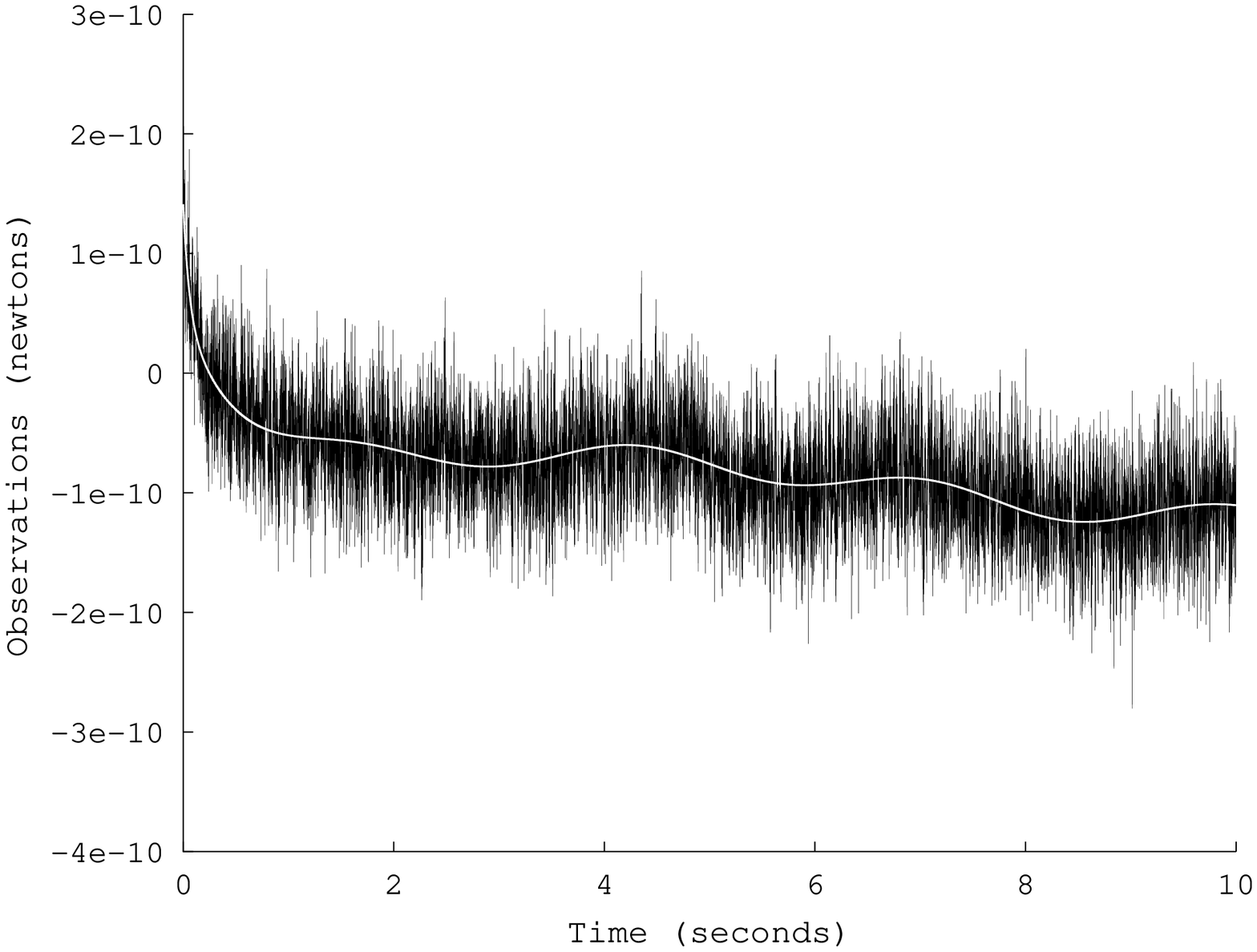}
    \caption{Data (black) and fit (white).}
    \end{subfigure}\hfill
  \begin{subfigure}{.5\textwidth}
    \centering
    \includegraphics[width=\linewidth]{./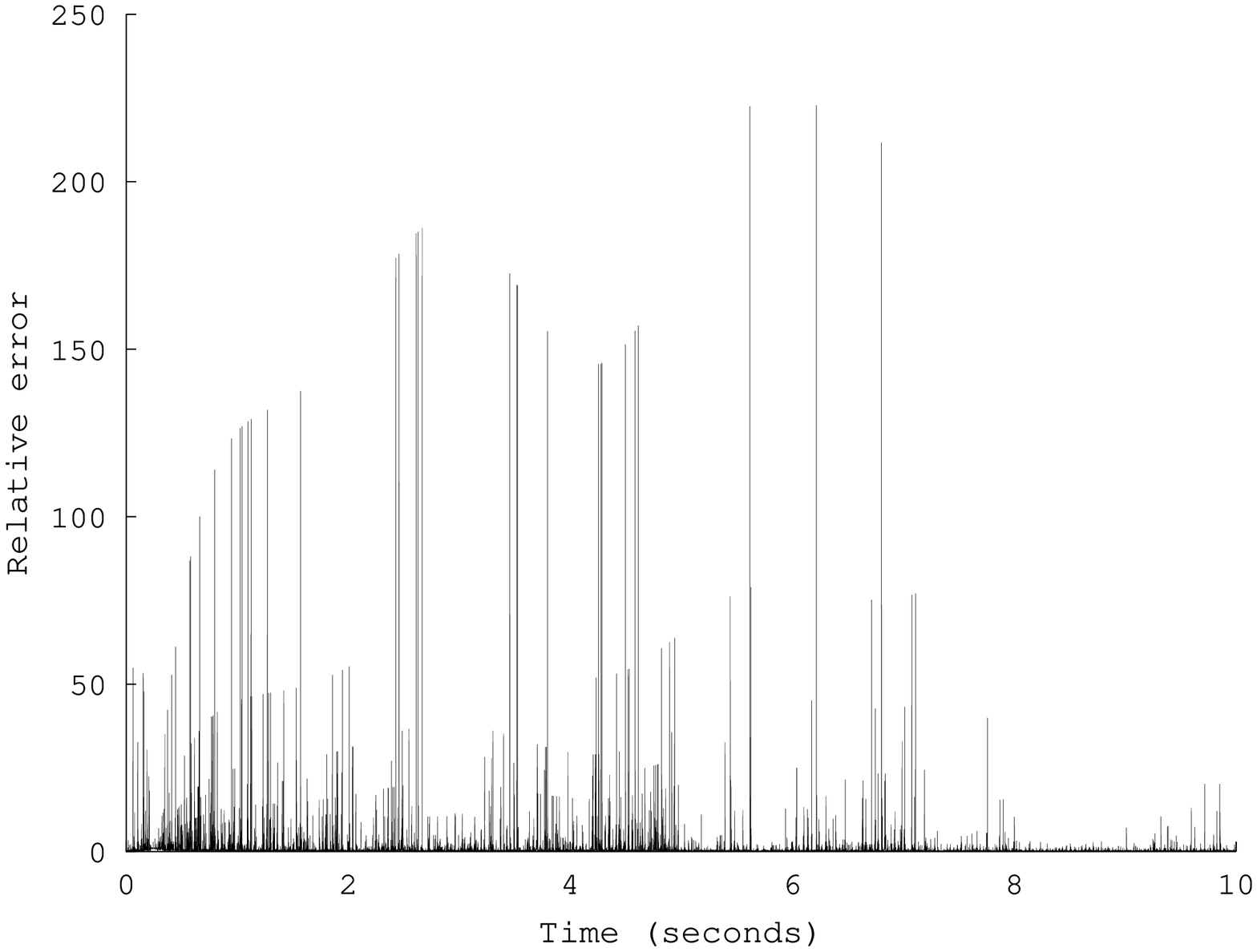}
    \caption{Relative error.}
    \label{subfig:errors_weird}
  \end{subfigure}%
  \caption{A word to the wise: don't miss the white line and the spikes\ldots}
\end{figure}

Figure \ref{subfig:errors_weird} shows that the relative errors are mostly around zero, although some spikes appear. These are numerous since the observations are quite noisy and many of them are close to zero. Even in this scenario, the spikes remain bounded, as far as we can tell, for the same reasons as in Section \ref{s_aj_cooling}.

\begin{table}
\begin{center}
\resizebox{\textwidth}{!}{
\begin{tabular}{ccccccc}
\toprule 
Divisions & CPU Time & $k_1$ & $k_2$ & $\lambda_1$ & $\lambda_2$ & $\lambda_3$ \\
& (in seconds) & & & & & \\ \midrule
$5$ & $135.93$ & $-4.9611570$ & $-1.0005773e-06$ & $1.7071912e-10$ & $7.3616932e-06$ & $-7.3617394e-06$ \\
$10$ & $2370.45$ & $-17.7513354$ & $-1.6802928$ & $9.2406742e-11$ & $1.5523536e-10$ & $-9.4023128e-11$ \\
$15$ & $12213.18$ & $-17.7513259$ & $-1.6802933$ & $9.2406730e-11$ & $1.5523535e-10$ & $-9.4023128e-11$ \\ \bottomrule
\end{tabular}
}
\vspace*{-0.2cm}
\caption{Parameters gathered in this table correspond to the biexponential decay of pattern (\ref{ec_oscilante}).} \vspace{0.35cm}
\end{center}
\end{table}

\begin{table}
\begin{center}
\resizebox{\textwidth}{!}{
\begin{tabular}{ccccccc}
\toprule 
Divisions & $\mu_1$ & $\mu_2$ & $\beta_1$ & $\beta_2$ & $\beta_3$ & $\beta_4$ \\ \midrule
$5$ & $2.1297350$ & $10$ & $5.9826879e-12$ & $-7.6271944e-12$ & $-3.0835996e-12$ & $2.0909088e-12$ \\
$10$ & $0.4874903$ & $2.2811038$ & $2.5556496e-11$ & $-2.5090025e-12$ & $-3.6852033e-12$ & $-9.4834037e-12$ \\
$15$ & $0.4874902$ & $2.2811036$ & $2.5556499e-11$ & $-2.5089862e-12$ & $-3.6851942e-12$ & $-9.4834075e-12$ \\ \bottomrule
\end{tabular}
}
\vspace*{-0.2cm}
\caption{Parameters gathered in this table correspond to the oscilatory part in pattern (\ref{ec_oscilante})} \vspace{0.35cm}
\end{center}
%
\begin{center}
\resizebox{.45\textwidth}{!}{
\begin{tabular}{ccc}
\toprule 
Divisions & SSR & MSE \\ \midrule
$5$ & $2.5877776e-17$ & $1.2636250e-21$ \\
$10$ & $2.5466615e-17$ & $1.2435478e-21$ \\
$15$ & $2.5466615e-17$ & $1.2435478e-21$ \\ \bottomrule
\end{tabular}
}\hspace*{.50\textwidth}
\vspace*{-0.2cm}
\caption{As in Table \ref{tbfit1}, $RSS=\sum_{i=1}^n (T_i-\F_k(t_i))^2$, being $n=20479$ the number of observations and $MSE=RSS/n$. TAC seems to be stabilized when taking $10$ or greater as the default mesh partition.} 
\label{tbfit2}
\end{center}
\end{table}

\section{Final Remarks}

Please observe that, allowing $k$ to range over $(0,\infty)$, we may apply verbatim everything we have done to the study of the remaining exponential evolutions. 

It would be more than welcome any hint about the conditions $T$ must fulfill to ensure the quasiconvexity of $\E2$. 

It is not too hard to adapt these methods to weighted norms, but every calculation must be carried out with care. 

\section{Acknowledgments}
We are grateful to José Luis Toca-Herrera for providing data to be fitted in Section \ref{s_aplications_biexp} and Rafael Benítez for his useful suggestions. We thank the crew on board RV Las Palmas for their inexhaustible spirit and enthusiasm shown during the Antarctic campaign. We are grateful to the military personnel at the Gabriel de Castilla Spanish Antarctic station for their support in the deployment and recovery of sensors; with special mention to Captain Javier Barba and Corporal María del Carmen Pesqueira.

Funding: This work was partially supported by MICINN [research project reference CTM2010-09635 (subprogramme ANT)], MINECO [research project reference MTM2016-76958-C2-1-P] and Consejería de Economía e Infraestructuras de la Junta de Extremadura [research projects references: IB16056 and GR15152].

\bibliographystyle{plain}
\addcontentsline{toc}{chapter}{Bibliography}
\bibliography{bibliografia_quasiconvex}{}

\end{document}